\theoremstyle{plain}
\newtheorem{theorem}{Theorem}[section]
\newtheorem*{theorem*}{Theorem}
\newtheorem{lemma}[theorem]{Lemma}
\newtheorem{proposition}[theorem]{Proposition}
\newtheorem*{remarks*}{Remarks}
\newtheorem*{example*}{Example}
\newtheorem*{examples*}{Examples}
\newtheorem{definition}[theorem]{Definition}
\newtheorem*{definition*}{Definition}
\newtheorem{remark}{Remark}
\newcommand{\Pmn}{{\mathcal P}^{\mu}_{\nu^*}(C_k(r))}
\newcommand{\Pmpn}{{\mathcal P}^{\mu+1}_{\nu^*}(C_k(r))}
\newcommand{\Fmn}{{\rm P}^{\mu}_{\nu^*}(C_k(r))}
\newcommand{\Fer}{{\rm P}}
\newcommand{\QFer}{{\rm Q}}
\newcommand{\PP}{{\mathcal P}}
\newcommand{\R}{{\mathbb R}}
\newcommand{\C}{{\mathbb C}}
\def\1{\:\!}
\def\2{\;\!}
\def\Diffc0{\operatorname{Diff^c_0}}
\def\Sympc0{\operatorname{Symp^c_0}}
\def\const{\operatorname{const}}
\def\gl{\lambda}
\def\.{\mskip1mu}
\def\?{\mskip-1mu}
\newcounter{mnotecount}[section]
\newcommand{\rmnote}[1]{}
\title[Delaunay type domains for an overdetermined elliptic problem]{Delaunay type domains for an overdetermined elliptic problem in $\mathbb{S}^n \times \mathbb{R}$ and $\mathbb{H}^n \times \mathbb{R}$}
\author{Filippo Morabito, Pieralberto Sicbaldi}
\address{Filippo Morabito}
\address{Department of Mathematics,
Korea University, Anam-dong, 136-701, Seoul, South Korea}
\address{\&}
\address{Korea Institute for Advanced Study,
School of Mathematics, Seoul, South Korea}
\email{morabitf@gmail.com}
\address{Pieralberto Sicbaldi}
\address{Laboratoire d'Analyse Topologie Probabilit\'es,
Universit\'e Aix-Marseille 3, Avenue de l'Escadrille Normandie
Niemen, 13397 Marseille, cedex 20, France}
\email{pieralberto.sicbaldi@univ-cezanne.fr}
\begin{document}

\begin{abstract}
We prove the existence of a countable family of Delaunay type domains
\[
\Omega_j \subset \mathbb{M}^n \times \mathbb{R}
\]
where $\mathbb{M}^n$ is the Riemannian manifold $\mathbb{S}^n$ or
$\mathbb{H}^n$ and $n \geq 2$, bifurcating from the cylinder
$B^{n} \times \mathbb{R}$ (where $B^{n}$ is a geodesic ball of radius 1 in
$\mathbb{M}^n$) for which the first eigenfunction of the
Laplace-Beltrami operator with zero Dirichlet boundary condition also
has constant Neumann data at the boundary: for each~$n \in
\mathbb{N}\setminus \{0\} $ and $j=1/n$, the overdetermined system
$$
\left\{
\begin{array} {ll}
\Delta_g\, u + \gl\, u = 0 &\mbox{in }\; \Omega_j\\
u=0 & \mbox{on }\; \partial \Omega_j \\
g(\nabla u, \nu) = \const &\mbox{on }\; \partial \Omega_j
\end{array}
\right.
$$
has a bounded positive solution for some positive constant
$\lambda$, where $g$ is the usual metric in $\mathbb{M}^n \times
\mathbb{R}$. The domains $\Omega_j$ are rotationally symmetric and
periodic with respect to the $\mathbb{R}$-axis of the cylinder and
as $j \to 0$ the domain $\Omega_j$ converges to the cylinder
$B^{n} \times \mathbb{R}$.
\end{abstract}

\maketitle


\section{Introduction and statement of the result}

A long-standing open problem is to classify domains $\Omega \subseteq \mathbb{R}^{n}$, $n \geq 2$, for which the over-determined elliptic problem
\begin{equation}
\label{problem.intro}
\left\{
\begin{array}{rclll}
    \displaystyle \Delta u + f(u) & = & 0 & \textnormal{in} & \Omega \\[3mm]
    \displaystyle u& >& 0 & \textnormal{in} & \Omega \\[3mm]
    \displaystyle u & = & 0 & \textnormal{on} & \partial \Omega \\[3mm]
    \displaystyle \langle \nabla  u , \nu\rangle & = & \textnormal{constant} & \textnormal{on} & \partial\Omega  \, ,
\end{array}
\right.
\end{equation}
has a solution $u \in C^{2}(\overline\Omega)$, where $f$ is a given Lipschitz function,
 $\nu$ is the normal vector about $\partial \Omega$, and
  $\langle \cdot,\cdot \rangle$ denotes the usual scalar product.

\medskip

In \cite{Serrin}, J. Serrin proved that if $\Omega$ is a $C^2$ bounded
domain where there exists a solution
$u \in C^2(\overline\Omega)$ to problem (\ref{problem.intro}) and $f$ is $C^1$, then
$\Omega$ must be a ball. The same result can be obtained when $f$ is
supposed to have only Lipschitz regularity, see \cite{Puc-Ser}. The
result by J. Serrin has been of outstanding importance for two
reasons: first, because it made available to the mathematical
analysis community the moving plane method, introduced some years
before by A. D. Alexandrov to prove that the only compact and
constant mean curvature hypersurfaces embedded in $\mathbb{R}^n$ are
the spheres (see~\cite{Alex}), and secondly for the many applications
to physics and to applied mathematics. Indeed, problem
(\ref{problem.intro}), when $f$ is constant, describes a viscous
incompressible fluid moving in straight parallel streamlines through
a straight pipe of given cross sectional form $\Omega$. If we fix
rectangular coordinates $(x,y,z)$ with the $z$-axis directed along
the pipe, it is well known that the flow velocity $u$ along the pipe
is then a function of $x$ and $y$, and satisfies
\[
\Delta u + k = 0
\]
where $k$ is a constant related to the viscosity and density of the
fluid. The adherence condition is given by $u=0$ on $\partial
\Omega$. The result by J. Serrin allows us to state that the
tangential stress per unit area on the pipe wall (represented by
$\mu\, \langle \nabla u, \nu \rangle$, where $\mu$ is the viscosity)
is the same at all points of the wall if and only if the pipe has a
circular cross section. An other model referable to problem
(\ref{problem.intro}), is the linear theory of torsion of a solid
straight bar of cross section $\Omega$, see for example \cite{Sok},
and in this framework the result by J. Serrin states that when a
solid straight bar is subject to torsion, the magnitude of the
resulting traction which occurs at the surface of the bar is
independent of the position if and only if the bar has a circular
cross section.

\medskip

Overdetermined boundary conditions arise naturally also in free
boundary problems, when the variational structure imposes suitable
conditions on the separation interface: see for example
\cite{Alt-Caf}. In this context it is important to underline that
several methods for studying locally the regularity of solutions of
free boundary problems are often based on blow-up techniques applied
to the intersection of $\Omega$ with a small ball centered in a
point of $\partial \Omega$, which lead then to the study of an
elliptic problem in an unbounded domain. In this framework, problem
(\ref{problem.intro}) in unbounded domains was considered by H.
Berestycki, L. Caffarelli and L. Nirenberg in \cite{BCN}. In this
case, the typical nonlinearity $f$ taken into account was $f(u) =
u-u^3$, which reduces the equation in (\ref{problem.intro}) to the
Allen-Cahn equation. Under the assumptions that $\Omega$ is a
Lipschitz epigraph with some suitable control at infinity for its
boundary, they proved that if problem (\ref{problem.intro}) admits a
smooth, bounded solution, then $\Omega$ is a half-space.

\medskip

Some rigidity results for overdetermined elliptic problems in
$\mathbb{R}^2$  and $\mathbb{R}^3$ have been obtained in some recent
papers by A. Farina and E. Valdinoci. In particular they obtain
natural assumptions under which one can conclude that if $\Omega$ is
an epigraph where there exists a solution to problem
(\ref{problem.intro}) then $\Omega$ must be a half-space and $u$ is
a function of only one variable,
see~\cite{Far-Val-0},~\cite{Far-Val-2} and~\cite{Far-Val-1}.

\medskip

For some types of functions $f$ the structure of the family of
domains $\Omega$ where the overdetermined problem
(\ref{problem.intro}) can be solved shares many similarities with
the class of embedded constant mean curvatures hypersurfaces. For
the bounded case, the analogy is very simple: the only compact
embedded constant mean curvature hypersufaces in $\mathbb{R}^n$ are
the spheres by the result of A.D. Alexandrov \cite{Alex} and the
only bounded domains in $\mathbb{R}^n$ where problem
\eqref{problem.intro} can be solved are the balls by the result of
J. Serrin \cite{Serrin}. In \cite{S} P. Sicbaldi showed the
existence of perturbations of the right solid cylinder in
$\mathbb{R}^n$ for $n \geq 3$, rotationally symmetric and periodic
in the vertical direction, where it is possible to solve problem
(\ref{problem.intro}) for the function $f(t) = \lambda\, t$. In
\cite{SS}, F. Schlenk and P. Sicbaldi show that previous examples
belong in fact to a smooth 1-parameter family of unbounded domains
$s \mapsto \Omega_s$ where \eqref{problem.intro} can be solved,
whose boundaries are smooth periodic hypersurfaces of revolution
with respect to an $\mathbb{R}$-axis. They obtain such a result by
paralleling the construction of the well known Delaunay's family of
constant mean curvature surfaces.

\medskip

It is interesting to remark that domains where problem
(\ref{problem.intro})  with $f = 0$ can be solved arise as limits
under scaling of sequences of domains where problem
(\ref{problem.intro}) with $f(t) = \lambda\, t$ can be solved, just
like minimal surfaces arise as limits under scaling of sequences of
constant mean curvature surfaces. Problem (\ref{problem.intro}) in
the interesting case when $f = 0$ has been recently studied in
$\mathbb{R}^2$ by F. H\'elein, L. Hauswirth and F. Pacard, in a work
highly inspired by the theory of minimal surfaces, where they get a
 Weierstrass representation for domains where the overdetermined
problem can be solved. In a recent paper, \cite{traizet}, M. Traizet
proves a one-to-one correspondence between 2-dimensional domains
where problem (\ref{problem.intro}) with $f = 0$ can be solved and a
special class of minimal surfaces, showing that the link is in fact
very deep.

\medskip

In \cite{Ros-Sic}, A. Ros and P. Sicbaldi studied problem
(\ref{problem.intro})  obtaining geometric and topological necessary
properties of the domain $\Omega$ and leading to geometric and
topological conditions for the existence of a solution of
(\ref{problem.intro}). Their work is highly inspired by the theory
of constant mean curvature surfaces embedded in $\mathbb{R}^3$, and
in particular they obtain, for 2-dimensional domains where
(\ref{problem.intro}) can be solved, a kind of half-space theorem
and for some functions $f$ also a result on the boundedness of the
ends of the domain. More precisely they prove the following two results:
\begin{enumerate}
\item If $\Omega$ is contained in a half-space of $\mathbb{R}^2$
and problem (\ref{problem.intro}) can be solved with $|\nabla u|$ bounded,
then $\Omega$ is either a ball, or a half-plane, or it is contained
in a slab and is symmetric with respect to the axis of the slab.
\item If $\Omega$ has finite topology and problem (\ref{problem.intro})
can be solved for some super-linear function $f$ (i.e. there exists a
positive constant $\lambda$ such that $f(t) \geq \lambda\, t$), then
each end $E$ of $\Omega$ is contained in a half-slab.
\end{enumerate}

\medskip

The strong link between overdetermined elliptic problems and
constant mean curvatures surfaces lets us think that the structure
of the family of domains where (\ref{problem.intro}) can be solved
is very rich and interesting. As for constant mean curvature
surfaces, overdetermined problems can be considered also in a
Riemannian manifold, and in this framework problem
(\ref{problem.intro}) becomes

\begin{equation}
\label{problem.intro1}
\left\{
\begin{array}{rclll}
    \displaystyle \Delta_g u + f(u) & = & 0 & \textnormal{in} & \Omega \\[3mm]
    \displaystyle u& >& 0 & \textnormal{in} & \Omega \\[3mm]
    \displaystyle u & = & 0 & \textnormal{on} & \partial \Omega \\[3mm]
    \displaystyle g( \nabla  u , \nu) & = & \textnormal{constant} & \textnormal{on} & \partial\Omega  \, ,
\end{array}
\right.
\end{equation}
where $g$ denotes the metric of the manifold and $\Delta_g$ is the
Laplace-Beltrami operator.

\medskip

\medskip

In this paper we are interested in generalize the result of P.
Sicbaldi  in \cite{S} to manifolds $\mathbb{S}^n \times \mathbb{R}$
and $\mathbb{H}^n \times \mathbb{R}$, where $\mathbb{S}^n$ and
$\mathbb{H}^n$ are respectively the n-dimensional manifolds of
constant sectional curvature equal to 1 or -1. In fact, we prove
that the solid right   cylinder $B^n \times \mathbb{R}$ (where $B^n$
is a geodesic ball in $\mathbb{S}^n$ or $\mathbb{H}^n$) can be
perturbed in order to obtain new domains where problem
(\ref{problem.intro1}) can be solved for the function $f(t) =
\lambda\, t$ for some positive constant $\lambda$. The boundary of
such domains is rotationally symmetric with respect to the
$\mathbb{R}$-axis of the cylinder, and is periodic in the vertical
direction. We can refer to such domains as Delaunay type domains.

\medskip

In order to state our result, let us denote by $\mathbb{M}^n$  the
Riemannian manifold $\mathbb{S}^n$ or $\mathbb{H}^n$. We denote the
coordinates of $\mathbb{M}^n \times \mathbb{R}$ as $(x,t)$, $x \in
\mathbb{M}^n$ and $t \in \mathbb{R}$. Let us fix a point $0$
(origin) in $\mathbb{M}^n$ and denote by $\|x\|$ the distance of $x
\in \mathbb{M}^n$
 to the origin $0 \in \mathbb{M}^n$. Our main result is the
following:

\begin{theorem}
\label{maintheorem} There exist a real positive number $T_*$, a
sequence of real  positive numbers $T_j \longrightarrow T_*$ and a
sequence of nonconstant functions $v_j \in C^{2,\alpha}(\mathbb{R})$ of period $T_j$ converging to $0$ in $C^{2,\alpha}(\mathbb{R})$ such that the domains
\[
\Omega_j = \left\{(x,t) \in \mathbb{M}^n \times \mathbb{R} \, , \,
\|x\| < 1 + v_j(t)\right\}
\]
have a positive solution $u_j \in C^{2,\alpha}(\Omega_j)$ to
the problem \eqref{problem.intro}. Moreover
$\displaystyle \int_0^{T_j} v_j\, dt = 0$.
\end{theorem}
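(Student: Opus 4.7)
The plan is to adapt the bifurcation strategy of P. Sicbaldi from \cite{S} (and its refinement in \cite{SS}) to the product setting $\mathbb{M}^n\times\mathbb{R}$. For each period $T>0$ and each sufficiently small, even, $T$-periodic, zero-mean function $v\in C^{2,\alpha}(\mathbb{R}/T\mathbb{Z})$, consider the domain $\Omega_v=\{(x,t):\|x\|<1+v(t)\}$ and let $u_v$ be the first Dirichlet eigenfunction of $\Delta_g$ on $\Omega_v$, with eigenvalue $\lambda(v)$ and suitable normalization. Using that $\Omega_v$ has revolution symmetry we identify $\partial\Omega_v$ with $\mathbb{R}/T\mathbb{Z}$ and define
\[
F(v,T)=g(\nabla u_v,\nu_v)\big|_{\partial\Omega_v}-\fint_0^T g(\nabla u_v,\nu_v)\,dt
\]
as an element of the space of even, zero-mean, $T$-periodic $C^{1,\alpha}$ functions. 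Since $u_0$ depends only on $r=\|x\|$, the map $F(0,T)$ vanishes identically, so $\{v=0\}$ is a trivial branch along which I search for bifurcation points.

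Next I would linearize at the trivial branch: writing $u_v=u_0+\phi+O(v^2)$ and using $u_v\big|_{\partial\Omega_v}=0$, one obtains the boundary value problem $\Delta_g\phi+\lambda\phi=0$ in $B^1\times\mathbb{R}$ with $\phi(1,t)=-u_0'(1)\,v(t)$. Fourier-expanding $v(t)=\sum_{k\geq 1}a_k\cos(2\pi k t/T)$, each mode decouples to a radial ODE
\[
\psi_k''+(n-1)\tfrac{s'(r)}{s(r)}\psi_k'+\bigl(\lambda-\tfrac{4\pi^2k^2}{T^2}\bigr)\psi_k=0,
\]
with $s(r)=\sin r$ or $\sinh r$, regular at $0$, and $\psi_k(1)=-u_0'(1)$. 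Computing $\partial_r u_v$ on the perturbed boundary to first order and using $\phi(1)=-(n-1)\tfrac{s'(1)}{s(1)}\,u_0'(1)\neq 0$ (from the equation at $r=1$), the linearized operator $H_T=\partial_v F(0,T)$ acts diagonally in the Fourier basis by multiplication by explicit coefficients $\sigma_k(T)$ involving the Dirichlet-to-Neumann ratio $\psi_k'(1)/\psi_k(1)$.

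The heart of the argument is to prove the existence of a unique $T_*>0$ at which $\sigma_1(T_*)=0$ while $\sigma_k(T_*)\neq 0$ for all $k\geq 2$, and to verify the transversality condition $\partial_T\sigma_1(T_*)\neq 0$. Once this is achieved, the Crandall--Rabinowitz bifurcation theorem, applied to $F$ in the Banach space of even zero-mean $C^{2,\alpha}$-functions (a setting that guarantees the Fredholm properties and automatically enforces $\int_0^{T_j}v_j\,dt=0$), produces the branch of nontrivial solutions $(v_j,T_j)$ with $T_j\to T_*$ and $v_j\to 0$ in $C^{2,\alpha}$. The main obstacle will be the spectral analysis of $H_T$: in $\mathbb{R}^n$ this reduces to identities for Bessel functions, whereas here $\psi_k$ solves a Legendre-type equation on $\mathbb{S}^n$ or a Jacobi-type equation on $\mathbb{H}^n$, so one must track the monotonicity and asymptotics of $\psi_k'(1)/\psi_k(1)$ in the parameter $\mu_k=\lambda-4\pi^2k^2/T^2$ carefully—using Sturm comparison with the eigenfunction $u_0$ itself and the known location of the next Dirichlet eigenvalue—to isolate the bifurcation value $T_*$ and simultaneously rule out resonances with higher modes.
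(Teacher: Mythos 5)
Your overall strategy matches the paper's: recast the problem as $F(v,T)=0$ via the Dirichlet-to-Neumann operator on perturbations of the cylinder, compute the linearization $H_T$ at the trivial branch $v\equiv 0$, Fourier-decouple into radial ODE problems (Legendre on $\mathbb{H}^n$, Ferrers on $\mathbb{S}^n$), and locate a value $T_*$ where the linearization degenerates. (Minor point: the identity you label $\phi(1)=-(n-1)\tfrac{s'(1)}{s(1)}\,u_0'(1)$ should read $u_0''(1)$; it comes from evaluating the radial ODE at $r=1$ where $u_0$ vanishes, and it is precisely the constant term $\partial_r^2\phi_1(1)$ that enters $\sigma_k(T)$.)

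The genuine gap is in the bifurcation step. You plan to ``prove the existence of a unique $T_*$ at which $\sigma_1(T_*)=0$ while $\sigma_k(T_*)\neq 0$ for all $k\geq 2$'' together with the transversality $\partial_T\sigma_1(T_*)\neq 0$, and then apply Crandall--Rabinowitz. Neither simplicity of the kernel nor transversality is established in the paper, and there is no reason to expect them to hold in this generality: the scaling identity $\sigma_k(T)=\sigma_1(T/k)$ means that if $\sigma_1$ has several zeros, a rational ratio between them can produce $\sigma_k(T_*)=0$ for some $k\geq 2$ at the same $T_*$. The paper sidesteps this by (i) proving $\sigma_1$ is \emph{analytic} in $T$, (ii) showing $\sigma_1(T)\to+\infty$ as $T\to 0^+$ and $\sigma_1(T)\to-\infty$ as $T\to+\infty$ via asymptotics of Legendre/Ferrers functions and the monotonicity of their zeros in the degree, hence $\sigma_1$ has a finite zero set and a smallest sign-changing zero $T_*$, and (iii) observing that for any other $V_{j_i}$ in $\ker H_{T_*}$ the eigenvalue $\sigma_{j_i}(T)=\sigma_1(T/j_i)$ does \emph{not} change sign at $T_*$ (by minimality of $T_*$). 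This yields an odd crossing number, so Krasnosel'skii's bifurcation theorem (after a Lyapunov--Schmidt reduction onto the finite-dimensional kernel) applies — and gives exactly a \emph{sequence} $(v_j,T_j)\to(0,T_*)$, not a smooth branch. Indeed the authors explicitly remark they do \emph{not} obtain a smooth one-parameter family, which is precisely what Crandall--Rabinowitz would deliver if your simplicity and transversality claims were available. As written, your argument would either have to fill in a nontrivial proof of simplicity/transversality (not supplied, and not known), or be rerouted through the odd-crossing-number machinery.
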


We remark that we do not have a smooth one-parameter family of domains, but only a sequence of domains converging to the straight cylinder. According with the case of $\mathbb{R}^n$ in \cite{SS}, it is tempting to conjecture that such domains belong in fact to a smooth one-parameter family of domains.

\medskip

\section{Rephrasing the problem}

Given a continuous function $v : \mathbb{R}/T
\mathbb{Z} \longmapsto (0, +\infty)$ whose $L^\infty$-norm is small enough, we define
\[
C_{1+v}^T  : =  \left\{ (x,t) \in \mathbb{M}^{n}\times \mathbb{R}/T
\mathbb{Z} \, : \, 0 \leq \|x\|   < 1 + v(t) \right\} \, .
\]
Our aim is to show that there exists a positive real number $T_*$, a sequence $T_j \to T_*$ and a sequence of nonconstant
functions $v_j \in C^{2,\alpha}(\mathbb{R}/T_j \mathbb{Z})$ of mean equal to zero and converging to the zero function in the $C^{2,\alpha}$-norm, such that the overdetermined
problem
\begin{equation}\label{a1}
\left\{
\begin{array}{rcccl}
    \Delta_{g} \, \phi + \lambda \, \phi & = & 0 & \textnormal{in} & C_{1+v}^T \\[3mm]
    \phi & = & 0 & \textnormal{on} & \partial C_{1+v}^T \\[3mm]
    \displaystyle   g( \nabla  \phi ,  \nu) & = & {\rm constant} & \textnormal{on} & \partial C_{1+v}^T
\end{array}
\right.
\end{equation}
has a nontrivial positive solution $(\phi, \lambda) = (\phi_j, \lambda_j)$ for the sequence $(v_j, T_j)$. Here $\nu$ denote the normal
vector field about $\partial C_{1+v}^T$, $\lambda$ is a positive
constant, and $g$ is the product metric of $\mathbb{M}^n \times
\mathbb{R}/T\, \mathbb{Z}$ (in particular the second factor is equipped with the metric induced by the standard metric of $\R$).

\medskip

We remark that the symmetry
of the problem allow us to require the function $v$ to be even.
\medskip

Let us denote by $g_{\mathbb{M}^n}$ the usual metric on
$\mathbb{M}^n$. Let $\lambda_{1}$ be the first eigenvalue of the
Laplace-Beltrami operator with zero Dirichlet boundary condition in
the unit geodesic ball
\[
B_1 = \{x \in \mathbb{M}^n \, : \, \|x\| < 1\} \,.
\]
We denote by
$\tilde \phi_{1}$ the associated eigenfunction
\begin{equation}
\left\{
\begin{array}{rcccl}
    \Delta_{g_{\mathbb{M}^n}} \tilde \phi_{1} + \lambda_{1}\, \tilde \phi_{1} & = & 0 & \textnormal{in} & B_{1} \\[3mm]
    \tilde \phi_{1} & = & 0 & \textnormal{on} & \partial B_{1}
\end{array}
\right.
\end{equation}
which is normalized to have $L^2(B_1)$-norm equal to $1/2\pi$. Then $\phi_1(x,t) =
\tilde \phi_1(x)$ solve the problem
\begin{equation}
\left\{
\begin{array}{rcccl}
    \Delta_{g } \phi_{1} + \lambda_{1}\, \phi_{1} & = & 0 & \textnormal{in} & C_{1}^T \\[3mm]
    \phi_{1} & = & 0 & \textnormal{on} & \partial C_{1}^T
\end{array}
\right.
\label{eq:11-11}
\end{equation}
and
\begin{equation}
\label{norm1} \displaystyle \int_{C_1^{2\pi}} \phi_1^2\, {\rm
dvol}_{g } = 1.
\end{equation}
As $\phi_1$ do not depend on $t$, sometimes we will write simply
$\phi_1(x)$.

 \medskip

Let $C^{2,\alpha}_{\textnormal{even},0} (\mathbb{R}/2\pi
\mathbb{Z})$  be the set of even functions on $\mathbb{R}/2\pi
\mathbb{Z}$ of mean equal to zero. For all function $v \in C^{2,\alpha}_{\textnormal{even},0}
(\mathbb{R}/2\pi \mathbb{Z})$ whose norm is small enough, the
domain $C_{1+v}^T$ is well defined for all $T >0$ and standard results
on Dirichlet eigenvalue problem (see \cite{GT} and \cite{C}) apply
to give the existence, for all $T >0$, of a unique positive function
\[
\phi = \phi_{v,T} \in C^{2, \alpha} \left(C_{1+v}^T\right)
\]
and a constant $\lambda = \lambda_{v,T} \in \mathbb R$ such that
$\phi$ is  a  solution to the problem
\begin{equation}\label{formula}
\left\{
\begin{array}{rcccl}
    \Delta_{g} \, \phi + \lambda \,\phi & = & 0 & \textnormal{in} & C_{1+v}^T \\[3mm]
    \phi & = & 0 & \textnormal{on} & \partial C_{1+v}^T
\end{array}
\right.
\end{equation}
which is normalized by
\begin{equation}
\displaystyle \int_{C_{1+v}^{2\pi}} \left(\phi\left(x,\frac{T}{2\pi}\,t\right)\right)^2\, {\rm dvol}_{g} = 1
\label{noral}
\end{equation}
In addition $\phi$ and $\lambda$ depend smoothly on the function
$v$, and $\phi = \phi_1$, $\lambda =\lambda_1$ when $v\equiv 0$.

\medskip

After canonical identification of $\partial C_{1+v}^T$ with
$S^{n-1}\times \mathbb{R}/T \mathbb{Z}$, we define the
Dirichlet-to-Neumann operator $N$~:
\[
N(v,T)  =  \displaystyle  g (\nabla \phi ,  \nu )  \, |_{\partial
C_{1+v}^{T}}   - \frac{1}{{\rm Vol}_{g} (\partial C_{1+v}^{T})} \,
\int_{\partial C_{1+v}^{T}} \, g (\nabla \phi ,  \nu ) \,
\mbox{dvol}_{g} \, ,
\]
where $\nu$ denotes the unit normal vector field to  $\partial
C_{1+v}^{T}$ and {$\phi$ is the solution of (\ref{formula}). A
priori $N(v,t)$ is a function defined over $S^{n-1}\times
\mathbb{R}/T \mathbb{Z}$, but it is easy to see that it depends only
on the variable $t \in \mathbb{R}/T \mathbb{Z}$ because $v$ has such
a property. For the same reason it is an even function, and moreover
it is clear that its mean vanishes. If now we operate a rescaling
and we define
\begin{equation}
\label{operator.F}
F(v,T)\, (t)\, = N(v,T)\, \left(\frac{T}{2\pi}\,t\right) \, ,
\end{equation}
 Schauder's estimates imply that {$F$} is well defined in a neighborhood of $(0,T)$ in the space $\mathcal
\mathcal{C}^{2,\alpha}_{\textnormal{even},0} (\mathbb{R}/2\pi \mathbb{Z})
\times \mathbb{R}$, and takes its values in $C^{1,\alpha}_{\textnormal{even},0} (\mathbb{R}/2\pi \mathbb{Z})$.
Our aim is to find a positive real number $T_*$, a sequence $T_j \to T_*$ and a sequence of nonconstant functions $v_j \in C^{2,\alpha}(\mathbb{R}/T_j \mathbb{Z})$ of mean equal to zero and converging to the zero function in the $C^{2,\alpha}$-norm, such that $F(v_j,T_j)=0$. Observe that, with
this condition, $\phi = \phi_{v_j,T_j}$ will be the solution to the problem
\eqref{a1} and our main theorem \ref{maintheorem} will be proved.

\section{The linearized operator}
\label{linearized}

Let $k$ the sectional curvature of the manifold $\mathbb{M}^n$ (i.e.
$k=1$ if $\mathbb{M}^n = \mathbb{S}^n$ and $k=-1$ if $\mathbb{M}^n =
\mathbb{H}^n$). If we choose spherical coordinates $(r, \theta)$,
with $\theta \in S^{n-1}$ and $r\in[0, +\infty)$ if $k<0$ and $r \in
[0,\pi]$ if $k
>0$, the usual metric in $\mathbb{M}^n$ can be written as
\[
g_{\mathbb{M}^n} = \textnormal{d}r^2 + S_k(r)^2 \,
\textnormal{d}\theta^2
\]
where
\[
S_k(r) = \left\{
\begin{array}{lll}
\sinh r & \textnormal{if} & k=-1\\
 \sin r & \textnormal{if} & k=1
\end{array}\right.
\]
(see \cite{C}, Section II.5, Theorem 1).

\begin{remark}\label{remark_k} {\rm In fact our result is true also in a more general situation. Instead of considering $\mathbb{S}^n$ and $\mathbb{H}^n$ whose sectional curvatures are $1$ and $-1$, it would be possible to consider the $n$-dimensional manifold $\mathbb{M}^n(k)$ of constant sectional curvature $k \neq 0$. When $k>0$ we have to assume that $\mathbb{M}^n(k)$ contains a geodesic ball of radius 1, and this is not true for all $k >0$. Using spherical coordinates $(r, \theta)$, with $\theta \in
S^{n-1}$ and $r\in[0, +\infty)$ if $k<0$ and $r \in
[0,\pi/\sqrt{k})$ if $k
>0$, the usual metric in $\mathbb{M}^n(k)$ is
\[
g_{\mathbb{M}^n(k)} = \textnormal{d}r^2 + S_k(r)^2 \,
\textnormal{d}\theta^2
\]
where
\[
S_k(r) = \left\{
\begin{array}{lll}
\displaystyle \frac{1}{\sqrt{-k}}\, \sinh (\sqrt{-k}\,r) & \textnormal{if} & k<0\\[3mm]
\displaystyle \frac{1}{\sqrt{k}}\, \sin (\sqrt{k}\,r)  & \textnormal{if} & k>0
\end{array}\right.
\]
(see \cite{C}, Section II.5, Theorem 1). The computations that follow are true in general for the manifold $\mathbb{M}^n(k) \times \mathbb{R}$, under the hypothesis that $\mathbb{M}^n(k)$ contains a geodesic ball $B_1$ of radius 1, even though we consider only the cases $k=1$ and $-1$.}
\end{remark}

\medskip

For all $ v \in \mathcal
\mathcal{C}^{2,\alpha}_{\textnormal{even},0} (\mathbb{R}/2\pi
\mathbb{Z})$ and all $T > 0$ let $\psi$ be the (unique) solution
(periodic with respect to the variable $t$) of
\begin{equation}\label{c00}
\left\{
\begin{array}{rcll}
    \displaystyle \Delta_{g} \psi + \lambda_{1}\,\psi & = & 0  & \textnormal{in} \qquad C_1^T \\[3mm]
    \psi & = &  - \displaystyle  {\partial_r \phi_{1}} \, v(2 \pi t/T) &
     \textnormal{on} \qquad \partial C_1^T
\end{array}
\right.
\end{equation}
which is $L^2(C_1^T)$-orthogonal to $\phi_1$.
We define
\begin{equation}
\label{h11}
\tilde H_T(v) : = \left.\left( {\partial_r \psi }
+ {\partial^2_r \phi_{1}} \, v\left(\frac{2 \pi t}{T}\right) \right) \right|_{\partial C_1^T}.
\end{equation}
By symmetry it is clear that $\tilde H_T(v)$ is a function only depending on $t$, then changing the variable we can define
\begin{equation}
H_T(v)(t) : = \tilde H_T(v) \left(\frac{T}{2 \pi}\, t \right) .
\label{Hache}
\end{equation}

The main result of this section is the:

\begin{proposition}
The linearization of the operator $F$ with respect to $v$ computed at the point $(0,T)$ is given by $H_T$.
\end{proposition}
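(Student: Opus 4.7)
The plan is to compute $\partial_s F(sv,T)\big|_{s=0}$ for a fixed $v\in \mathcal{C}^{2,\alpha}_{\textnormal{even},0}(\mathbb{R}/2\pi\mathbb{Z})$ and $T>0$, and to match the resulting first-order expression term by term with $H_T(v)$. Writing $(\phi_s,\lambda_s):=(\phi_{sv,T},\lambda_{sv,T})$ and pulling back via a smooth family of diffeomorphisms $C_{1+sv}^T\to C_1^T$, expand
\[
\phi_s = \phi_1 + s\,\dot\phi + O(s^2), \qquad \lambda_s = \lambda_1 + s\,\dot\lambda + O(s^2),
\]
where $\dot\phi$ is now a function on the fixed reference domain $C_1^T$. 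The smooth dependence stated after \eqref{noral} guarantees these expansions in $C^{2,\alpha}$. The first-order data can then be read off one equation at a time.

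Differentiating the Dirichlet condition $\phi_s(1+sv(2\pi t/T),\theta,t)=0$ at $s=0$ gives $\dot\phi|_{\partial C_1^T} = -\partial_r\phi_1(1)\,v(2\pi t/T)$, and the linearized PDE reads $\Delta_g\dot\phi + \lambda_1\dot\phi + \dot\lambda\,\phi_1 = 0$ on $C_1^T$. Pairing this with $\phi_1$, integrating by parts, and using $\phi_1|_{\partial C_1^T}=0$ together with $\Delta_g\phi_1=-\lambda_1\phi_1$, one obtains
\[
\dot\lambda\int_{C_1^T}\phi_1^2\,\mathrm{dvol}_g = \int_{\partial C_1^T}\dot\phi\,\partial_r\phi_1\,d\sigma = -[\partial_r\phi_1(1)]^2\int_{\partial C_1^T} v(2\pi t/T)\,d\sigma = 0,
\]
so $\dot\lambda=0$ because $v$ has mean zero. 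Differentiating the normalization \eqref{noral} in $s$, the boundary-variation contribution involves $\phi_1^2$ on $\partial C_1^T$ and hence vanishes, leaving the orthogonality $\int_{C_1^T}\phi_1\dot\phi\,\mathrm{dvol}_g=0$. Thus $\dot\phi$ solves the same Dirichlet problem as $\psi$ in \eqref{c00} and satisfies the same $L^2$-orthogonality to $\phi_1$; by simplicity of the first eigenvalue, $\dot\phi=\psi$.

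To linearize the Neumann data, parametrize $\partial C_{1+sv}^T$ as the graph $\Phi=r-1-sv(2\pi t/T)=0$. In the product metric $\nabla\Phi = \partial_r - s(2\pi/T)v'(2\pi t/T)\,\partial_t$ and $|\nabla\Phi|_g^2=1+O(s^2)$, so the outward unit normal is $\nu_s = \partial_r - s(2\pi/T)v'(2\pi t/T)\,\partial_t + O(s^2)$. Since $\phi_1$ is $t$-independent, the $\partial_t$-component of $\nabla\phi_s$ is itself $O(s)$, hence the cross term in $g(\nabla\phi_s,\nu_s)$ is $O(s^2)$ and
\[
g(\nabla\phi_s,\nu_s)\big|_{r=1+sv(2\pi t/T)} = \partial_r\phi_s\bigl(1+sv(2\pi t/T),\theta,t\bigr) + O(s^2).
\]
A Taylor expansion in $r$, together with $\dot\phi=\psi$, gives $\partial_r\phi_1(1) + s\bigl[\partial_r\psi(1,\theta,t) + \partial_r^2\phi_1(1)\,v(2\pi t/T)\bigr] + O(s^2)$, whose first-order coefficient is exactly $\tilde H_T(v)$.

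The zeroth-order constant $\partial_r\phi_1(1)$ is removed by the mean-subtraction in the definition of $N$. It remains to check that $\tilde H_T(v)$ is already mean-zero on $\partial C_1^T$, so that nothing more must be projected away. Fourier-expanding $\psi$ in $t$, the zero-mode $\hat\psi_0$ on $B_1$ solves $(\Delta_{g_{\mathbb{M}^n}}+\lambda_1)\hat\psi_0=0$ with vanishing Dirichlet data and is $L^2$-orthogonal to $\phi_1$, hence $\hat\psi_0\equiv 0$ by simplicity of $\lambda_1$; the divergence theorem on $C_1^T$ then yields $\int_{\partial C_1^T}\partial_r\psi\,d\sigma = -\lambda_1\int_{C_1^T}\psi\,\mathrm{dvol}_g=0$, and $\int_{\partial C_1^T}v(2\pi t/T)\,d\sigma=0$ by hypothesis. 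After the rescaling $t\mapsto Tt/(2\pi)$ built into \eqref{operator.F} and \eqref{Hache}, this establishes $\partial_v F|_{(0,T)}(v)=H_T(v)$. The only technically delicate step is the first-order expansion of the Neumann data on the moving boundary; it works cleanly here because the product structure of $g$ and the $t$-independence of $\phi_1$ kill all cross-terms at first order.
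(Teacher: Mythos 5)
Your proof is correct and follows essentially the same Hadamard shape-derivative strategy as the paper: linearize the eigenvalue problem and the normalization to identify $\dot\phi=\psi$ and $\dot\lambda=0$, then expand the normal derivative on the moving boundary to first order in $s$ and match the result with \eqref{h11}. The only differences are stylistic --- you work directly on $C_1^T$ and extract the unit normal from the level-set representation $\Phi=r-1-sv(2\pi t/T)=0$, whereas the paper rescales to $C_1^{2\pi}$, pulls back by a cutoff-modified diffeomorphism $Y$, and reads the normal off the induced metric $\hat g$ --- and you additionally supply a divergence-theorem argument for why the first-order coefficient $\partial_r\psi+\partial_r^2\phi_1\,v$ is already mean-zero, a fact the paper states without proof.
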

\begin{proof} To linearize the operator  $F$ (see \eqref{operator.F})  with respect
 to $v$ at $(0,T)$ we will compute
\[
\lim_{s \rightarrow 0} \frac{F(s\, w, T) - F(0, T)}{s} .
\]
Precisely we determine the first order approximation of $F(s\, w,T)$
with respect to the variable $s$.
 We
parameterize $C_{1+v}^T,$ with $v=sw,$ on $C_{1}^{2 \pi}$ by
\[
Y ( y,t) : =\left( \left(1 + s \, \chi ( y) \,  w   \right)  \, y\,
, \, \frac{T t}{2\pi}\right)
\]
for $y \in \mathbb{M}^n$ and $t \in \mathbb{R}$ and where $\chi$ is
a cutoff function identically equal to $0$ when $\|y\| \leq 1/4$ and
identically equal to $1$ when $\|y\|\geq 1/2$. The metric induced by
$Y$ will be denoted by
\begin{equation}
\hat g : = Y^*  g \label{hat.metric}
\end{equation}
If $\phi$  solves
\begin{equation}
\label{system.0} \left\{
\begin{array}{rcccl}
    \Delta_{g} \,  \phi + \lambda \,  \phi & = & 0 &
    \textnormal{in} & C_{1+v}^{2\pi} \\[3mm]
    \phi & = & 0 & \textnormal{on} & \partial C_{1+v}^{2\pi}
\end{array}
\right.
\end{equation}
with
\[
\int_{C_{1+v}^{2\pi}}  \phi^2 \, \mbox{dvol}_{g} =1,
\]
then  $\hat \phi = Y^* \phi$   is solution (smoothly depending on
the real parameter $s$) to
\begin{equation}
\label{system.hat} \left\{
\begin{array}{rcccl}
    \Delta_{\hat{g}} \, \hat \phi + \hat \lambda \, \hat \phi & = & 0 & \textnormal{in} & C_1^{2\pi} \\[3mm]
    \hat \phi & = & 0 & \textnormal{on} & \partial C_1^{2\pi}
\end{array}
\right.
\end{equation}
with $\hat \lambda = \lambda $ and satisfying
\[
\int_{C_1^{2\pi}} \hat \phi^2 \, \mbox{dvol}_{\hat g} =1.
\]
We remind that the function $\phi_1$ is the positive solution to
\eqref{system.0} with $v=0$, $\lambda=\lambda_1$ and $L^2$-norm equal to 1. Clearly the
function $\hat \phi_1  : = Y^* \phi_1$ solves \eqref{system.hat}.
 Moreover
\begin{equation}
\label{boundary} \hat \phi_1 (y,t) = \phi_1 \left(   \left(1+ s \, w
\right) \, y\, , \frac{T t}{2\pi}
 \right)
\end{equation}
on $\partial C_1^{2\pi}$. Writing $\hat \phi = \hat \phi_1 + \hat
\psi$ and $\hat \lambda = \lambda_1 + \mu$, we find out that $\hat
\psi$ solves
\begin{equation}
\label{f001} \left\{
\begin{array}{rcccl}
    \Delta_{\hat{g}} \, \hat \psi + (\lambda_1 + \mu) \, \hat \psi + \mu \, \hat \phi_1 & = & 0 & \textnormal{in} & C_1^{2\pi} \\[3mm]
    \hat \psi & = & - \hat \phi_1 & \textnormal{on} & \partial C_1^{2\pi}
\end{array}
\right.
\end{equation}
with
\begin{equation}
\label{formula-new-10} \int_{C_1^{2\pi}} (2 \, \hat  \phi_1 \, \hat
\psi+  \hat \psi^2  ) \,  \mbox{dvol}_{\hat g} = \int_{C_1^{2\pi}}
\phi_1^2 \, {\rm dvol}_{g} -  \int_{C^{2\pi}_{1+s w}} \phi_1^2 \,
{\rm dvol}_{ g}.
\end{equation}
Obviously $\hat \psi$ and $\mu$ are smooth functions of $s$. If $s
=0$, then $C_{1+sw}^{T}=C_{1}^{T}$ and \eqref{system.hat} reduces to
\eqref{system.0} with $v=0.$ In particular we have $ \phi
=\phi_1=\hat \phi_1,$  $\lambda = \lambda_1,$
 $\hat  \psi \equiv 0,$ $\mu=0$ and $\hat g = g$. We set
\[
\dot \psi := \partial_s \hat \psi |_{s=0} \qquad \mbox{and} \qquad
\dot \mu := \partial_s \mu |_{s=0}.
\]
Differentiating (\ref{f001}) with respect to $s$ and evaluating  the
result at $s=0$, we obtain
\begin{equation}
\label{f00100} \left\{
\begin{array}{rlllll}
    \Delta_{ g} \, \dot \psi + \lambda_1 \, \dot \psi + \dot \mu \, \phi_1  & = & 0 & \textnormal{in} & C_1^{2\pi} \\[3mm]
    \dot \psi & = & - \partial_r \phi_1 \,  w & \textnormal{on} & \partial C_1^{2\pi}
\end{array}
\right.
\end{equation}
because from \eqref{boundary}, differentiation with respect to $s$
at $s=0$ yields $\partial_s \hat \phi_1 |_{s=0} = \partial_r \phi_1
\, w$, where $r=\|y\|.$
\medskip

Differentiating \eqref{formula-new-10} with respect to $s$ and
evaluating the result at $s=0$, we obtain
\begin{equation}
\label{formula-new-100} \int_{C_1^{2\pi}}  \phi_1 \, \dot \psi  \,
\mbox{dvol}_{ g} = 0.
\end{equation}
Indeed, the derivative of the right hand side of
\eqref{formula-new-10} with respect to $s$ vanishes when $s=0$ since
$\phi_1$ vanishes identically on $\partial C_1^{2\pi}$.

\medskip

{If we multiply the first equation of \eqref{f00100} by $\phi_1$ and
we integrate it over $C_1^{2\pi}$, using the boundary condition and
the fact that the average of $w$ is $0$ we conclude that $\dot \mu
=0$.} Consequently the $2 \pi$-periodic function $\dot \psi(y,t)$
is related to the solution $\psi(y,t)$ of \eqref{c00} by the
identity $\psi(y,t):=\dot \psi(y,2\pi t/T).$
We proved that
\[
\hat \phi(y,t) = \hat \phi_1(y,t) + s \,  \psi(y,T t/2 \pi) +
{\mathcal O} (s^2).
\]
In particular, in $C_1^{2\pi} \setminus C_{3/4}^{2\pi}$, we have
\[
\begin{array}{rlll}
\hat \phi  (y,t) & = & \displaystyle \phi_1 \left( \left(1+ s \, w\right)
\, y\,,\, T t/2\pi \right) + s \,  \psi (y,T t/2 \pi) + {\mathcal O} (s^2) \\[3mm]
& =  & \displaystyle \phi_1 \left(y,T t/2\pi\right) +
s \left(  w \, r \, \partial_r \phi_1 + \psi(y,T t/2 \pi) \right)
+{\mathcal O} (s^2).
\end{array}
\]

\medskip

To complete the proof of the result, we will  compute $\hat g(\nabla
\hat \phi,\hat \nu)$ on the boundary of $C^{2\pi}_1.$
Such a function is the normal
derivative of $\hat \phi$ when the normal is computed
with respect to the metric $\hat g$. We use the coordinates
$(r,\theta, t),$ with $y = r \, \theta,$ (being $(r,\theta)$ the
coordinates introduced at the beginning of Section \ref{linearized}).
In $C_1^{2\pi}\setminus C_{3/4}^{2\pi}$ the metric $\hat g$ equals
\begin{eqnarray*}
\hat g = (1 + s w)^2  dr^2 + s r\, w'\, (1 + s w)\, dr\, dt
 + \left((T/2\pi)^2 +  s^2\, r^2\, (w')^2 \right)\, dt^2
 + S_k(r)^2\, (1+sw)^2\, d \theta^2.
\end{eqnarray*}
It follows from this
expression that the unit normal vector field to $\partial
C_1^{2\pi}$ for the metric $\hat g$ is given by
\[
\hat \nu  = \left( ( 1 + s \, w )^{-1} +  {\mathcal O} (s^2)\right)
\, \partial_r +  {\mathcal O} (s) \, \partial_{t}.
\]
As a result, on $\partial C_1^{2\pi},$
\[
\hat g ( \nabla  \hat \phi , \hat \nu ) =  \partial_r \phi_1  + s \,
\left(w\, \,  \partial_r^2 \phi_1  + \partial_r  \psi(y,Tt/2 \pi)
\right) + {\mathcal O}(s^2).
\]
On $\partial C_1^{2\pi}$ the term $ w  \,  \partial_r^2 \phi_1  +
\partial_r \psi(y,Tt/2 \pi) $ has mean equal to zero and $\partial_r \phi_1$
is constant. Using $\hat g ( \nabla  \hat \phi , \hat \nu )$
to compute $F(sw),$   we get that the linearized of
$F$ is $H_T$. \hfill$\Box$
\end{proof}

\section{The structure of the linearized operator}

Let $v \in \mathcal \mathcal{C}^{2,\alpha}_{\textnormal{even},0} (\mathbb{R}/2\pi \mathbb{Z})$. Recalling that the mean of $v$ is zero and the fact that $v$ is even, by Fourier expansion $v$ can be written as
\begin{equation}
\label{fourier} v = \sum_{j \geq 1} a_j\, \cos(jt).
\end{equation}

Observe that in principle $\phi_1$ is only defined in the cylindrical domain $C_1^{2\pi}$,
however, this function being radial in the first $n$ variables and not depending on $t$, it is a solution of a second order ordinary differential equation and then it can be extended at least in a neighborhood of $\partial C_1^{2\pi}$.
\medskip

We will need the following:
\begin{lemma}
\label{le:3.1}
Assume that $v \in \mathcal \mathcal{C}^{2,\alpha}_{\textnormal{even},0} (\mathbb{R}/2\pi \mathbb{Z})$ and write $v$ as in (\ref{fourier}). For $T >0$ we define
\[
\phi_{0} (x,t) =  \partial_r \phi_{1}(x) \, v (2 \pi t /T)
\]
where $r = \|x\|$. Then
\begin{equation}\label{b}
\Delta_{{g}} \phi_{0} + \lambda_{1} \, \phi_{0} = {\partial_r
\phi_{1}} \, \sum_{j \geq 1} a_j\, \frac{1}{S_k(r)^{2}} \,
\cos\left(\frac{2\pi j t}{T}\right)\, \left[n-1 - \left(\frac{2 \pi
j}{T}\right)^2 S_k(r)^2 \right].
\end{equation}
\end{lemma}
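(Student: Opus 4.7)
The plan is to compute $\Delta_g \phi_0 + \lambda_1 \phi_0$ directly in spherical coordinates $(r,\theta,t)$, exploiting the product structure of the metric $g = dr^2 + S_k(r)^2\, d\theta^2 + dt^2$ and the fact that $\phi_0$ separates into a radial piece times a function of $t$.

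First I would write the Laplace--Beltrami operator on $\mathbb M^n \times \mathbb R$ in the form
\[
\Delta_g = \partial_r^2 + (n-1)\,\frac{S_k'(r)}{S_k(r)}\,\partial_r + \frac{1}{S_k(r)^2}\,\Delta_{S^{n-1}} + \partial_t^2.
\]
Since $\partial_r \phi_1$ is radial, the angular term drops out and, writing $V(t) := v(2\pi t/T)$, I get
\[
\Delta_g \phi_0 = V(t)\Bigl[\partial_r^3 \phi_1 + (n-1)\tfrac{S_k'}{S_k}\partial_r^2 \phi_1\Bigr] + \partial_r \phi_1 \cdot V''(t).
\]
The key algebraic trick is to eliminate $\partial_r^3 \phi_1$ by differentiating the radial eigenvalue equation $\partial_r^2 \phi_1 + (n-1)\tfrac{S_k'}{S_k}\partial_r \phi_1 = -\lambda_1 \phi_1$ once in $r$. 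This gives
\[
\partial_r^3 \phi_1 + (n-1)\tfrac{S_k'}{S_k}\,\partial_r^2 \phi_1 = -\lambda_1\,\partial_r \phi_1 - (n-1)\Bigl(\tfrac{S_k'}{S_k}\Bigr)'\partial_r\phi_1,
\]
so that the $-\lambda_1 \partial_r \phi_1$ piece cancels precisely the $\lambda_1 \phi_0$ contribution.

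Next I would simplify $\bigl(S_k'/S_k\bigr)'$. A direct check using $S_k'' = -k S_k$ and the first integral $(S_k')^2 + k S_k^2 = 1$ shows that
\[
\Bigl(\frac{S_k'}{S_k}\Bigr)' = \frac{S_k''S_k - (S_k')^2}{S_k^2} = -\frac{1}{S_k^2}
\]
uniformly for $k = \pm 1$ (the same identity works verbatim in the general setting of Remark \ref{remark_k}). Substituting, I obtain
\[
\Delta_g \phi_0 + \lambda_1 \phi_0 = \partial_r \phi_1 \Bigl[\frac{n-1}{S_k(r)^2}\,V(t) + V''(t)\Bigr].
\]

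Finally I would insert the Fourier expansion $v(s) = \sum_{j\ge 1} a_j \cos(js)$. Then $V(t) = \sum_j a_j \cos(2\pi j t/T)$ and $V''(t) = -\sum_j a_j\, (2\pi j/T)^2 \cos(2\pi j t/T)$, so factoring out $1/S_k(r)^2$ yields the claimed identity \eqref{b}. There is no serious obstacle: the only non-routine point is recognizing and applying the $r$-derivative of the radial eigenvalue equation together with the curvature identity $(S_k'/S_k)' = -1/S_k^2$, after which the rest is bookkeeping.
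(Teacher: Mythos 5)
Your proof is correct and follows essentially the same route as the paper: both reduce the computation to showing $\Delta_g\,\partial_r\phi_1 = -\lambda_1\,\partial_r\phi_1 + \frac{n-1}{S_k(r)^2}\,\partial_r\phi_1$ and then insert the Fourier expansion of $v$. The paper states this identity as ``easy to compute,'' whereas you explicitly derive it by differentiating the radial eigenvalue equation and invoking $(S_k'/S_k)' = -1/S_k^2$, which is a welcome amount of extra detail but not a different method.
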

{\bf Proof :} The Laplace-Beltrami operator for the metric $g$ can be written as
\[
\Delta_g = \partial_r^2 + (n-1)\frac{C_k(r)}{S_k(r)}\, \partial_r + \frac{1}{S_k(r)^2}\, \Delta_{S^{n-1}} + \partial _t^2
\]
where
\[
C_k(r) = \left\{
\begin{array}{lll}
\cosh r & \textnormal{if} & k=-1\\
\cos r & \textnormal{if} & k=1
\end{array}\right.
\]
(see \cite{C}, Section II.5, Theorem 1). Then it is easy to compute
\[
\Delta_{g} \, {\partial_r \phi_{1}}  =
 - \lambda_{1} \, {\partial_r \phi_{1}} + \frac{n-1}{S_k^2(r)} \, {\partial_r \phi_{1}}  \,
\]
and
\begin{eqnarray*}
    \Delta_{{g}} \phi_{0} & = & - \lambda_{1} \, \phi_{0} + {\partial_r \phi_{1}} \, \sum_{j \geq 1} a_j\, \frac{1}{S_k(r)^{2}} \, \cos\left(\frac{2\pi j t}{T}\right)\, \left[ n-1 - \left(\frac{2 \pi j}{T}\right)^2 S_k(r)^2
    \right].
\end{eqnarray*}
This completes the proof of the result. \hfill $\Box$

\begin{remark}\label{remark_k2} {\rm With respect to Remark \ref{remark_k} we give the formula of $C_k(r)$ when $k \notin \{-1,1\}$. In fact, we have
\[
C_k(r) = \left\{
\begin{array}{lll}
\cosh (\sqrt{-k}\, r )& \textnormal{if} & k<0\\
\cos (\sqrt{k}\, r ) & \textnormal{if} & k>0 \, .
\end{array}\right.
\]
}
\end{remark}

\medskip

We investigate now the structure of the linearized operator $H_T$. The main result of this section is the:

\begin{proposition}
\label{H}
For all $T>0$, the operator
\[
H_T : \mathcal \mathcal{C}^{2,\alpha}_{\textnormal{even},0} (\mathbb{R}/2\pi \mathbb{Z}) \longrightarrow \mathcal C^{1,\alpha}_{\textnormal{even},0} (\mathbb{R}/2\pi \mathbb{Z}) ,
\]
is a self adjoint, first order elliptic operator preserving, for all $j \in \mathbb{N} \backslash \{0\}$, the eigenspace $V_j$ spanned by the function $\cos(jt)$.
\end{proposition}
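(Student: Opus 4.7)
The plan is to diagonalize $H_T$ by separation of variables in Fourier modes. By linearity it is enough to show that for every $j \geq 1$ and $v = \cos(jt) \in V_j$ one has $H_T(v) = \mu_j \cos(jt)$ for some real $\mu_j$. For such $v$, the boundary data in \eqref{c00} is $-\partial_r\phi_1(1)\,\cos(2\pi j t/T)$, which is invariant under the $S^{n-1}$ factor. Using the expression
\[
\Delta_g = \partial_r^2 + (n-1)\frac{C_k(r)}{S_k(r)}\,\partial_r + \frac{1}{S_k(r)^2}\,\Delta_{S^{n-1}} + \partial_t^2
\]
obtained in the proof of Lemma \ref{le:3.1}, I would make the ansatz $\psi(r,\theta,t) = \eta_j(r)\,\cos(2\pi j t/T)$. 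This solves $\Delta_g\psi + \lambda_1\psi = 0$ if and only if the radial profile satisfies the ODE
\[
\eta_j'' + (n-1)\frac{C_k(r)}{S_k(r)}\,\eta_j' + \left(\lambda_1 - \left(\tfrac{2\pi j}{T}\right)^2\right)\eta_j = 0, \qquad r \in (0,1),
\]
together with $\eta_j(1) = -\partial_r\phi_1(1)$ and boundedness at the regular singular point $r=0$.

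Granting a unique admissible $\eta_j$ (see below), I note that $\psi = \eta_j(r)\cos(2\pi j t/T)$ is automatically $L^2(C_1^T)$-orthogonal to $\phi_1$: indeed $\phi_1$ is independent of $t$ and $\int_0^T \cos(2\pi j t/T)\,dt = 0$ for $j \geq 1$. Hence this $\psi$ is the function used to define $\tilde H_T(v)$ in \eqref{h11}, and a direct computation gives
\[
\tilde H_T(v) = \bigl(\eta_j'(1) + \partial_r^2\phi_1(1)\bigr)\,\cos(2\pi j t/T).
\]
The rescaling $t \mapsto Tt/2\pi$ in \eqref{Hache} then yields $H_T(v)(t) = \mu_j \cos(jt)$ with $\mu_j := \eta_j'(1) + \partial_r^2\phi_1(1) \in \mathbb{R}$, proving that $H_T(V_j) \subseteq V_j$. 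Since $\{\cos(jt)\}_{j \geq 1}$ is a complete $L^2$-orthogonal system in the $L^2$-closure of $\mathcal{C}^{2,\alpha}_{\textnormal{even},0}(\mathbb{R}/2\pi\mathbb{Z})$ and the spectral values $\mu_j$ are real, $H_T$ is formally self-adjoint.

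For first-order ellipticity I would analyse the large-$j$ asymptotics of $\mu_j$. When $m := 2\pi j/T$ satisfies $m^2 \gg |\lambda_1|$, the ODE is a regular perturbation of the modified Bessel-type equation associated with the parameter $m$; its solution bounded at $r=0$ grows essentially like $e^{mr}$ near $r=1$, so $\eta_j'(1)/\eta_j(1) = m + O(1)$. Combining with $\eta_j(1) = -\partial_r\phi_1(1) > 0$, this gives $\mu_j = (2\pi j/T)\,|\partial_r\phi_1(1)| + O(1)$, so the eigenvalues are real and grow linearly in $|j|$. Equivalently, $H_T$ has scalar principal symbol $c\,|\xi|$ with $c = |\partial_r\phi_1(1)| > 0$, so it is a first-order elliptic operator, consistent with the one-derivative loss from $\mathcal{C}^{2,\alpha}$ to $\mathcal{C}^{1,\alpha}$ already present in the statement.

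The main obstacle I expect is the ODE step: establishing existence and uniqueness of the bounded solution $\eta_j$ at the regular singular point $r=0$, and, more delicately, proving the sharp asymptotic $\eta_j'(1)/\eta_j(1) = 2\pi j/T + O(1)$ uniformly in $j$. Both require a careful Frobenius/WKB-type analysis of the radial operator, with attention to the factor $C_k(r)/S_k(r)$ that behaves like $(n-1)/r$ near the origin. Once this is in hand, preservation of each $V_j$, self-adjointness, and first-order ellipticity all drop out of the explicit diagonal description $H_T\bigl|_{V_j} = \mu_j \cdot \textnormal{identity}$.
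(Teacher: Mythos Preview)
Your argument is correct. For the preservation of each $V_j$ it essentially coincides with the paper's: both separate variables and reduce to the radial ODE (the paper's $c_j$ is your $\eta_j$, and the paper likewise simply asserts existence of the continuous solution on $[0,1]$ without a detailed Frobenius analysis). The proofs diverge on the other two claims. For self-adjointness the paper gives a direct Green's-identity computation showing $\int_0^{2\pi}\bigl(H_T(w_1)\,w_2 - w_1\,H_T(w_2)\bigr)\,dt = 0$, whereas you infer it from the diagonal form with real eigenvalues; both are valid, and yours is arguably cleaner once the diagonalization is in hand. For first-order ellipticity the paper bypasses all eigenvalue asymptotics by observing that $H_T$ is the Dirichlet-to-Neumann operator for $\Delta_g + \lambda_1$ plus a constant multiple of the identity, and then citing the standard fact that such DtN maps are first-order elliptic; this completely sidesteps the WKB-type estimate $\eta_j'(1)/\eta_j(1) = 2\pi j/T + O(1)$ that you flag as the main obstacle. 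Your asymptotic route would eventually succeed (and the paper in fact carries out closely related large-parameter asymptotics later, in Section~\ref{analytic}, via Legendre and Ferrers functions), but for the purpose of this proposition the structural DtN argument is far shorter.
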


{\bf Proof.}  The fact that $H_T$ is a first order elliptic operator is standard since it is the sum of the Dirichlet-to-Neumann operator for $\Delta_{g} + \lambda_1$ and  a constant times the identity. In particular, elliptic estimates yield
\[
\| H_T(w)  \|_{\mathcal C^{1,\alpha}_{\textnormal{even},0}
(\mathbb{R}/2\pi \mathbb{Z})} \leq c \, \| w \|
_{\mathcal \mathcal{C}^{2,\alpha}_{\textnormal{even},0}
(\mathbb{R}/2\pi \mathbb{Z})}.
\]
The fact that the operator $H_T$ is (formally) self-adjoint is easy. Let $\psi_1$ (resp. $\psi_2$) the solution of \eqref{c00} corresponding to the function ${w_{1}}$ (resp. $w_2$). Let $\tilde \psi_i (x,t) = \psi_i (x, T t/2\pi)$. We compute
\[
\begin{array}{rlllll}
\displaystyle \partial_r \phi_1 (1) \, \displaystyle \int_{0}^{2\pi} ( H_T(w_1) \, w_2 - w_1 \, H_T(w_2) )    \, dt & = & \displaystyle \partial_r \phi_1 (1) \, \displaystyle \int_{0}^{2\pi}( \partial_r \tilde \psi_1 \, w_2 - \partial_r \tilde \psi_2 \, w_1)  \, dt\\[3mm]
& = & \displaystyle \int_{0}^{2\pi}( \tilde \psi_1 \, \partial_r \tilde \psi_2 - \tilde \psi_2  \, \partial_r  \tilde \psi_1  )  \, dt\\[3mm]
& = & \displaystyle \frac{1}{\mbox{Vol}_{g}(S^{n-1})}\int_{C_1^{2\pi}}( \tilde \psi_1  \,  \Delta_{g} \tilde \psi_2 - \tilde \psi_2 \Delta_{g} \, \tilde \psi_1 )  \, \mbox{dvol}_{ g}\\[3mm]
& = & 0.
\end{array}
\]

To prove the other statements, we define for all $v \in \mathcal \mathcal{C}^{2,\alpha}_{\textnormal{even},0} (\mathbb{R}/2\pi \mathbb{Z})$ written as in (\ref{fourier}), $\Psi$ to be the continuous solution of
\begin{equation}\label{c000}
\left\{
\begin{array}{rcll}
    \displaystyle \Delta_{g} \Psi + \lambda_{1}\,\Psi & = & \displaystyle  {\partial_r \phi_{1}} \,\sum_{j \geq 1} a_j\, \frac{1}{S_k(r)^{2}} \, \cos\left(\frac{2\pi j t}{T}\right)\, \left[ n-1 - \left(\frac{2 \pi j}{T}\right)^2 S_k(r)^2 \right]  & \textnormal{in} \qquad C_1^T \\[3mm]
    \Psi & = & 0  & \textnormal{on} \qquad \partial C_1^T \, .
\end{array}
\right.
\end{equation}
Observe that $\partial_r \phi_1$ vanishes at first order at $r=0$ and hence the right hand side is smaller than a constant times $r^{-1}$ near the origin. Standard elliptic estimates then imply that the solution $\Psi$ is at least continuous near the origin (the right side of (\ref{c000}) belongs to the space $L^p(C_1^T)$ for each $p < n$, then the solution $\Psi$ belongs to the Sobolev space $W^{2,p}(C_1^T)$ for each $p < n$, and by the Sobolev embedding theorem for a compact domain $\Omega$ we have $W^{2,p}(\Omega) \subseteq C^{0,\alpha}(\Omega)$ for $p \geq \frac{n}{2-\alpha}$). A straightforward computation using the result of Lemma~\ref{le:3.1} and writing $\Psi(x,t) = \psi(x,t) + \partial_r \phi_1(x) \, v(2 \pi t/T)$, shows that
\begin{equation}
\tilde H_T(v) : = \partial_r \Psi |_{\partial C_1^T} .
\label{Hache0}
\end{equation}

\medskip

By this alternative definition, it is clear that $H_T$ preserves the eigenspaces $V_j$ and in particular, $H_T$ maps into the space of functions whose mean is zero. \hfill $\Box$

\medskip

By the previous proposition
\begin{equation}
\label{hfourier}
\tilde H_T(v) = \sum_{j \geq 1} \sigma_j(T)\, a_j\, \cos \left(\frac{2 \pi j t}{T}\right),
\end{equation}
where $\sigma_j(T)$ are the eigenvalues of $H_T$ with respect to the eigenfunctions $\cos(jt)$. From (\ref{h11}), (\ref{hfourier}) and (\ref{c00}) we deduce that
\[
\psi = \sum_{j \geq 1} c_j(r) \, a_j\, \cos \left(\frac{2 \pi j t}{T}\right),
\]
where $c_j$ is the continuous solution on $[0,1]$ of
\begin{equation}\label{cc}
\displaystyle \left( \partial_r^2 + (n-1)\frac{C_k(r)}{S_k(r)}
 \, \partial_r  + \lambda_{1}\right)\, c_j -\left(\frac{2 \pi j}{T}\right)^2\, c_{j}= 0,
\end{equation}
with $c_j(1) = -\partial_r \phi_1(1)$. Then
\begin{equation}
\label{sigma2} \sigma_j (T) = \partial_r c_j (1) + \partial_r^2
\phi_1(1).
\end{equation}

\medskip

Our next task is to find the kernel of the operator $H_T$. For this it is enough to study the eigenvalues $\sigma_j$. We remark that if we set
\[
\frac{j}{T} = \frac{1}{D} \,,
\]
for $T >0$, from (\ref{cc}) we obtain that
\[
\sigma_j(T) = \sigma_1(D).
\]
Then, in order to study the kernel of the linearized operator, it suffices to consider only the first eigenvalue $\sigma_1$. For this aim we will use Legendre and Ferrers functions.

\medskip

To simplify the notation, in the sequel we will drop the lower
 index~$_1$, and we set $\sigma_1 = \sigma$.

\section{Recollection on Legendre and Ferrers functions}

In what follows we shall use several properties of associated
Legendre and Ferrers functions. For the convenience of the reader,
we recall their definitions and some properties. This section can be
skipped by the reader who is familiar with these functions.
 For more details we refer to \cite{D, L, O}.

\subsection{Legendre functions}
The (general) Legendre equation in the variable $z~\in~\C$ (see
\cite{O}, 5.12) is
\begin{equation}
\label{legendre.equation}
 (1-z^2)\, \frac{d^2w}{dz^2}-2
z\, \frac{dw}{dz}+ \left[\nu(\nu+1)-\frac{\mu^2}{1-z^2}\right]\, w=0
\end{equation}
where $\mu,\nu$ are complex parameters. To solve this equation one
considers special solutions to the hypergeometric equation:
$$
z(1-z) \frac{d^2u}{dz^2}+ \{c-(a+b+1)z\}\frac{du}{dz}-abu=0,
$$
where $a,b,c \in \C.$ The solutions to this equation can be found by the power series
method. If we consider a series centered at $z=0$ we find a series
which is convergent for $|z|<1$ and whose sum is known as
hypergeometric function:
$$F(a,b;c;z)=\sum_{s=0}^\infty \frac{(a)_s(b)_s}{(c)_s}
\frac{z^s}{s!},$$
where $c>0$ (see \cite{O}, 9.02, p.159).
Let $\Gamma$ be the Gamma function and let $(\cdot)_s$ denote the Pochammer
 symbol
\[
(q)_n = \left\{
\begin{array}{ll}
1 & \mbox{if } n=0\\
q\, (q+1)\, (q+2) \cdots (q+n-1) &  \mbox{if } n\geq 1
\end{array}
\right. .
\]
Tthe Olver hypergeometric function ${\bf F}$ (see \cite{O},
9.03, p.159) is defined by
\[
{\bf F}(a,b;c;z)=\frac{F(a,b;c;z)}{\Gamma(c)}=
\sum_{s=0}^\infty \frac{(a)_s(b)_s}{\Gamma(c+s)}
\frac{z^s}{s!}
\]
for $|z|<1$ and extended to $|z|\geq 1$ by analytic continuation.
Such a function presents the advantage of being defined for all
values of $c.$
Using the Olver hypergeometric function we can construct a first
solution of \eqref{legendre.equation}:
\begin{equation}
\begin{array}{lcl}
\displaystyle \PP_{\nu}^{-\mu}(z) & = & \displaystyle \left(
\frac{z-1}{z+1}\right)^{\mu/2}\, {\bf F}\left(\nu+1\,,\, -\nu\,;\,
\mu+1\,;\,\frac{1- z}{2}\right).
\end{array}
\end{equation}
A second solution can be built from the first one by using the fact
that also
$$(-z)^a\, {\bf F}\left(a,1+a-c,1+a-b,\frac1z\right)$$
is a solution to the hypergeometric equation and replacing
$a=\nu+1,$ $b=-\nu,$ $c=\mu+1$ and $z$ by $\frac{1- z}{2}.$
 We get (after multiplication by $2^\nu\,\Gamma(\nu+1)$):
\begin{equation}
\begin{array}{lcl}
\displaystyle {\bf Q}_{\nu}^{\mu}(z) & = & \displaystyle 2^\nu\,
\Gamma(\nu+1)
 \frac{(z-1)^{\mu/2-\nu-1}}{(z+1)^{\mu/2}}\,
{\bf F}\left(\nu+1\,,\, \nu-\mu+1\,;\,2\nu+2\,;\,\frac
2{1-z}\right).
\end{array}
\end{equation}
Because the  Legendre equation is unchanged by
replacing $\mu$ by $-\mu$ or $\nu$ by $-\nu-1,$ functions
$$
\PP_{\nu}^{\pm \mu}(z),\PP_{-\nu-1}^{\pm \mu}(z),
 {\bf Q}_{\nu}^{\pm \mu}(z),{\bf Q}_{-\nu-1}^{\pm \mu}(z)\,,
$$
are all solutions, but only the following four of them are distinct:
$$
\PP_{\nu}^{\pm \mu}(z), {\bf Q}_{\nu}^{\mu}(z),
{\bf Q}_{-\nu-1}^{\mu}(z) \,.
$$
Moreover only two
of them are linearly independent, as one can see by the two following connection formulas:
\begin{equation}
\label{legendre.connection}
\begin{array}{lcl}
\displaystyle \frac{2 \sin(\mu \pi)}{\pi}\, {\bf Q}_{\nu}^{\mu}(z) &
= & \displaystyle\frac{\PP_{\nu}^{ \mu}(z)}{\Gamma(\nu+\mu+1)}-
\frac{\PP_{\nu}^{- \mu}(z)}{\Gamma(\nu-\mu+1)}\\[3mm]
\displaystyle \cos(\nu \pi)\, \PP_{\nu}^{ -\mu}(z) & = &
\displaystyle \frac{{\bf Q}_{-\nu-1}^{\mu}(z)}
{\Gamma(\nu+\mu+1)}-\frac{{\bf Q}_{\nu}^{\mu}(z)} {\Gamma(\mu-\nu)}.
\end{array}
\end{equation}
The functions $\PP_{\nu}^{ \pm \mu}(z)$ are called  associated
Legendre functions of first kind. The functions ${\bf Q}_{\nu}^{ \pm
\mu}(z)$ are called associated Legendre functions of second kind\footnote{For more clarity we denote the associated Legendre
functions of first kind by $\PP^{\pm \mu}_{\nu}(x).$ We do not adopt
the standard notation $P^{\pm \mu}_{\nu}(x)$ which is very similar
to $\Fer^{\pm \mu}_{\nu}(x),$ that denotes the associated Ferrers
function of first kind.}. Such functions exist for all
values of $\nu,\mu,z,$ except possibly the singular points $z=\pm 1$
and $\infty.$ As functions of $z$ they are many valued with branch
points at $z=\pm 1$ and $\infty.$ The principal branches of both
solutions are obtained by introducing a cut along the real axis from
$z=-\infty$ to $z=+1,$ and assigning the principal value to each
function.

\subsection{Ferrers functions}

Suppose that $\PP_{\nu}^{ -\mu}(z)$ and ${\bf Q}_{\nu}^{\mu}(z)$
are real valued on the real interval  $[1,+\infty)$ (it is the case when $\nu,\mu \in \R$).
On the cut from $-\infty$ to $1$ there are two possible values
for each function, depending whether the cut is approached from
 the upper or lower side. Replacing $z$ by $x,$ we denote these
 values by $$\PP_{\nu}^{ -\mu}(x+i0),\, \PP_{\nu}^{ -\mu}(x-i0),\,
 {\bf Q}_{\nu}^{\mu}(x +i 0),\, {\bf Q}_{\nu}^{\mu}(x - i 0).$$
For $|x| < 1,$
 it is possible to define four real
valued functions if $\nu$ and $\mu$ are real.
They are known as associated Ferrers functions. Two of such functions are defined as follows under the assumption $-(\nu +\mu) \notin \mathbb{N}^*$ (here $\mathbb{N}^* = \{1,2,3, ...\}$):
\begin{equation}
\begin{array}{lcl}
\Fer_\nu^\mu(x) & = & \displaystyle e^{i\mu
\pi/2}\, \PP_{\nu}^{\mu}(x+i0)=
e^{-i\mu \pi/2}\,  \PP_{\nu}^{ \mu}(x-i0)\\[3mm]
\QFer_\nu^\mu(x) & = & \displaystyle \frac12 \, \Gamma(\nu + \mu +1)
\left[ e^{-i\mu \pi/2}\, {\bf Q}_{\nu}^{\mu}(x+i0)+ e^{i\mu \pi/2}\, {\bf
Q}_{\nu}^{ \mu}(x-i0) \right].
\end{array}
\end{equation}
The two other associated Ferrers functions are $\Fer_\nu^{-\mu}(x)$ and
$\QFer_\nu^{-\mu}(x).$
It is possible to show that
\[
\Fer_\nu^\mu(x)= \left( \frac{1+x}{1-x} \right)^{\mu/2} {\bf
F}\left(\nu+1\,,\, -\nu\,;\, 1-\mu\, ;\, \frac{1-x}2\right).
\]
Such a formula allows to extend the definition of $\Fer^\mu_\nu(x)$
to complex values of $\nu,\mu$ and $x:$ cuts are introduced along
the real intervals $(-\infty,-1]$ and $[1,+\infty)$. The expression
for other Ferrers functions can be derived using the connection
formulas:
\begin{equation}
\label{ferrers.connection}
\begin{array}{lcl}
\Fer^{\mu}_{\nu} & = & \displaystyle \frac {\Gamma(\nu+\mu+1)}{\Gamma(\nu-\mu+1)}
\left[\cos(\mu \pi)\, \Fer^{-\mu}_{\nu}+ \frac{2 \sin(\mu
\pi)}{\pi}\, \QFer_{\nu}^{-\mu}\right] \\[5mm]
\QFer^{\mu}_{\nu} & = &\displaystyle  \frac
{\Gamma(\nu+\mu+1)}{\Gamma(\nu-\mu+1)} \left[\cos(\mu \pi)\,
\QFer^{-\mu}_{\nu}- \frac{\pi \sin(\mu \pi)}{2}\,
\Fer_{\nu}^{-\mu}\right].
\end{array}
\end{equation}
In particular the formula we get for $\QFer^{\mu}_{\nu}$
is used to extend $\QFer^\mu_\nu(x)$
to complex values of $\nu,\mu$ and $x$ in the same way
as for $\Fer^\mu_\nu(x)$.

\subsection{Asymptotics}
We recall now some asymptotics about Legendre and Ferrers functions that we will need through the paper.

 \begin{lemma}(see \cite{D}, Section 14.8)
\label{asymptotics}
The associated Legendre  functions
$\PP^\mu_\nu(x),$ ${\bf Q}^\mu_\nu(x)$  defined on
$(1,+\infty)$ have the following asymptotic behaviours for $x \to
1^+$:
\begin{eqnarray}
\label{P.unbounded.half.integer}{ \PP^\mu_\nu}(x) &\sim& \frac{1}{\Gamma(1-\mu)}
\left( \frac{2}{x-1} \right)^{\frac{\mu}2} \quad {\hbox{if }} \mu
\notin \mathbb{N}^*\\[5mm]
\label{P.bounded.integer}
{ \PP^\mu_\nu}(x) & \sim &
\frac{\Gamma(\nu+\mu+1)}{\Gamma(\nu-\mu+1)\mu !} \left(
\frac{x-1}{2} \right)^{\frac{\mu}2} \quad {\hbox{if }} \mu \in \mathbb{N}^*, -(\nu \pm \mu) \notin \mathbb{N}^* \\[5mm]
{ {\bf Q}^\mu_\nu}(x) & \sim &
\frac{\Gamma(\mu)}{2\Gamma(\nu+\mu+1)} \left(
\frac{2}{x-1} \right)^{\frac{\mu}2} \quad {\hbox{if }} Re(\mu)>0,
-(\nu+\mu)  \notin \mathbb{N}^*
\label{Q.unbounded}
\end{eqnarray}
Associated Ferrers functions $ \Fer^\mu_\nu(x)$, $\QFer^\mu_\nu(x) $ have the
following asymptotic behaviour for $x \to 1^-$:
\begin{eqnarray}
\label{Fer.unbounded.integer}
\Fer^\mu_\nu(x) &\sim& \frac{1}{\Gamma(1-\mu)}
\left( \frac{2}{1-x} \right)^{\frac{\mu}2} \, , \,  \mu
 \notin \mathbb{N}^*\\[5mm]
\label{Fer.bounded.half.integer}
\Fer^\mu_\nu(x) & \sim &
\frac{\Gamma(\nu+\mu+1)\,  (-1)^\mu}{\Gamma(\nu-\mu+1)\, \mu !} \left(
\frac{1-x}{2} \right)^{\frac{\mu}2} \, , \, \mu  \in \mathbb{N}^* , \nu \neq \mu-1,\mu-2, ..., -\mu\\[5mm]
\label{QFer.integer}
\QFer_{\nu}^{\mu}(x) & \sim & \frac{1}{2} \cos(\pi \mu)
{\Gamma(\mu)} \left(\frac{2}{1-x} \right)^{\mu/2},
\mu \notin \left(\mathbb{N}^*-\frac12\right)\\[5mm]
\label{QFer.half.integer}
\QFer_{\nu}^{\mu}(x) & \sim &
\frac{\pi\, \Gamma(\nu+\mu+1)\, (-1)^{\mu+\frac12} }{2\, \Gamma(\mu+1)\Gamma(\nu-\mu+1)}
\left(\frac{1-x}{2} \right)^{\frac\mu2}\, , \, \mu  \in \left(\mathbb{N}^*-\frac12\right),  -(\nu\pm\mu)  \notin \mathbb{N}^*
\end{eqnarray}
where $\displaystyle \mathbb{N}^*-\frac12 = \left\{\frac12, \frac32, \frac52, ...\right\}$.
\end{lemma}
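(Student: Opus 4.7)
The plan is to reduce each of the seven asymptotics to the elementary identity ${\bf F}(a,b;c;0)=1/\Gamma(c)$ satisfied by the Olver hypergeometric function, compensating for the vanishings of $1/\Gamma$ at non-positive integers by invoking the linear identities among the solutions of the Legendre equation collected in Section~5.

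For the Legendre asymptotics on $(1,+\infty)$, the starting point is the substitution $\mu \mapsto -\mu$ in the definition $\PP_\nu^{-\mu}(z) = \left(\frac{z-1}{z+1}\right)^{\mu/2} {\bf F}(\nu+1,-\nu;\mu+1;(1-z)/2)$. Letting $z \to 1^+$, so that the argument of ${\bf F}$ tends to $0$, the hypergeometric factor contributes $1/\Gamma(1-\mu)$ and the prefactor contributes $((z-1)/2)^{-\mu/2}$, which immediately yields \eqref{P.unbounded.half.integer}. When $\mu$ is a positive integer this leading coefficient vanishes, so instead I would invoke the integer-$\mu$ identity $\PP^\mu_\nu(z) = \frac{\Gamma(\nu+\mu+1)}{\Gamma(\nu-\mu+1)} \PP^{-\mu}_\nu(z)$ (valid because both sides are Frobenius solutions of the same Legendre equation attached to the same exponent at $z=1$) together with the expansion of $\PP_\nu^{-\mu}$ read off directly from its definition, giving \eqref{P.bounded.integer}. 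For ${\bf Q}^\mu_\nu$ I would use the connection formula \eqref{legendre.connection}, which expresses ${\bf Q}^\mu_\nu$ as a linear combination of $\PP^{\pm\mu}_\nu$; for $\mathrm{Re}(\mu)>0$ only the $\PP^\mu_\nu$ term is singular, so its leading behaviour dominates, and the reflection formula $\Gamma(\mu)\Gamma(1-\mu)=\pi/\sin(\mu\pi)$ reshapes the resulting constant into \eqref{Q.unbounded}.

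The Ferrers asymptotics on $(-1,1)$ follow the same scheme. From the explicit formula $\Fer^\mu_\nu(x) = \left(\frac{1+x}{1-x}\right)^{\mu/2} {\bf F}(\nu+1,-\nu;1-\mu;(1-x)/2)$, the limit $x \to 1^-$ produces \eqref{Fer.unbounded.integer} directly, and the degenerate integer case \eqref{Fer.bounded.half.integer} is recovered from the integer-$\mu$ relation $\Fer^\mu_\nu = (-1)^\mu \frac{\Gamma(\nu+\mu+1)}{\Gamma(\nu-\mu+1)} \Fer^{-\mu}_\nu$, whose extra sign $(-1)^\mu$ traces back to the factor $e^{\pm i\mu\pi/2}$ in the complex definition of $\Fer^\mu_\nu$ via $\PP^\mu_\nu(x\pm i0)$. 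For $\QFer^\mu_\nu$ I would insert the asymptotics of $\Fer^{\pm\mu}_\nu$ into the connection formula \eqref{ferrers.connection}; splitting according to whether $\cos(\mu\pi)\ne 0$ or $\cos(\mu\pi)=0$ (the latter corresponding to $\mu\in\mathbb{N}^*-\frac12$, where $\sin(\mu\pi)=\pm 1$) isolates the dominant term and, after another use of the reflection formula, gives \eqref{QFer.integer} and \eqref{QFer.half.integer} respectively.

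The main bookkeeping difficulty is tracking which Gamma functions are allowed to vanish or blow up under each hypothesis, and correctly interpreting the multivalued prefactors $(\cdot)^{\mu/2}$ on the real intervals considered; in particular one must verify that the principal branches chosen in the definitions of $\PP^\mu_\nu$ and $\Fer^\mu_\nu$ produce exactly the signs $(-1)^\mu$ and $(-1)^{\mu+\frac12}$ appearing in \eqref{Fer.bounded.half.integer} and \eqref{QFer.half.integer}. No individual step involves a substantive analytic obstacle; the entire content is a careful unwinding of the series and connection formulas already recalled in Section~5.
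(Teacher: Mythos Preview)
The paper does not actually prove this lemma: it is stated with the citation ``(see \cite{D}, Section 14.8)'' and no argument is given, the asymptotics being imported wholesale from the Digital Library of Mathematical Functions. Your proposal therefore goes beyond what the paper does, supplying a self-contained derivation from the hypergeometric representations and connection formulas already recorded in Section~5.

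Your outline is essentially correct. A couple of places deserve a little more care than you indicate. First, the connection formula \eqref{legendre.connection} for ${\bf Q}^\mu_\nu$ has $\sin(\mu\pi)$ in the denominator, so when $\mu$ is a positive integer (which is allowed under the hypothesis $\mathrm{Re}(\mu)>0$) you cannot invoke it directly; you need a limiting argument in $\mu$, or an alternative representation, to cover that case. Second, for $\QFer^\mu_\nu$ the formula in \eqref{ferrers.connection} expresses it in terms of $\QFer^{-\mu}_\nu$ and $\Fer^{-\mu}_\nu$, not $\Fer^{\pm\mu}_\nu$ as you write; you would need to combine both lines of \eqref{ferrers.connection}, or first obtain the behaviour of $\QFer^{-\mu}_\nu$ separately, before isolating the dominant term. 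These are bookkeeping refinements rather than genuine obstacles, and your overall strategy---reduce everything to ${\bf F}(a,b;c;0)=1/\Gamma(c)$ and handle the degenerate parameter values via the linear relations among solutions---is the standard and correct one.
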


\section{Finding a formula for $\sigma(T)$ via Legendre and Ferrers functions}
\label{different.signs}
We are going now to study the first eigenvalue $\sigma_1(T) = \sigma(T)$ of the linearized operator $H_T$. For this we need a formula of $\sigma(T)$. Recall that
\begin{equation} \label{e:cphi}
\sigma(T) \,=\, c'(1) + \phi''(1)\, ,
\end{equation}
where $\phi(r)$ is the bounded solution of the ordinary differential
equation
\begin{equation}
\label{equation.fi} u''(r) + (n-1)\, \frac{C_k(r)}{S_k(r)}\,
u'(r)+\lambda_1\,  u(r)=0
\end{equation}
such that $\phi(1)=0$ and $\phi(r)>0$ on $[0,1)$, and normalized by
\eqref{norm1}, and $c(r)$ is the continuous solution on $[0,1]$ of
the ordinary differential equation
\begin{equation}
\label{equation.c} u''(r) + (n-1)\, \frac{C_k(r)}{S_k(r)}\, u'(r)+
\left[\lambda_1 - \left(\frac{2\pi}{T}\right)^2\right] \,  u(r)=0
\end{equation}
such that $c(1)=-\phi'(1)$. We observe that $\phi'(1) \neq 0$
otherwise $\phi(r)\equiv 0.$ Indeed the solution of
\eqref{equation.fi} satisfying also $\phi(1)=\phi'(1)=0$ is the
function identically equal to zero.

\medskip
The general solution of \eqref{equation.fi} can be found as follows.
The function
\[
p(r):=S_k(r)^{\frac n2-1} \, u(r)
\]
satisfies:
$$p''(r) + \frac{C_k(r)}{S_k(r)}\, p'(r)+
\left\{\lambda_1+k\left(\frac n2-1\right)+ \left[\left(\frac
n2-1\right)\frac{C_k(r)}{S_k(r)}\right]^2  \right\}\, p(r)=0. $$ By
the change of variable $x=x(r)=C_k(r),$ we get that the function
\[
w(x)=p(r(x))
\]
satisfies \eqref{legendre.equation} after replacing
$z$ by the real variable $x$  and setting
$$ \mu=\frac{n-2}2\,, \quad
\nu=-\frac12 + \sqrt{ \frac{(n-1)^2}4+\frac{\lambda_1}{k}} \, .
$$
When $\frac{(n-1)^2}4+\frac{\lambda_1}{k}<0$ then we will always
consider the square root having positive imaginary part. In other
terms  $Im(\nu)>0.$ The general solution to
\eqref{legendre.equation} can be expressed as linear combination of
$\PP^\mu_{\nu}(x),{\bf Q}^\mu_{\nu}(x) $ if $k<0$ and of
$\Fer^\mu_{\nu}(x),\QFer^\mu_{\nu}(x) $ if $k>0.$ Consequently the
general solution to \eqref{equation.fi} is:
\begin{equation}
u(r)=\left\{
\begin{array}{c}
a\, (S_k(r))^{1-\frac n2} \, \Fer^\mu_{\nu}(C_k(r))+
b\, (S_k(r))^{1-\frac n2} \, \QFer^\mu_{\nu}(C_k(r)) \qquad \hbox{ if }k>0\\[3mm]
a\, (S_k(r))^{1-\frac n2} \, \PP^\mu_{\nu}(C_k(r))+ b\,
(S_k(r))^{1-\frac n2} \, {\bf Q}^\mu_{\nu}(C_k(r)) \qquad  \hbox{ if
}k<0. \label{solution.4.cases}
\end{array}
\right.
\end{equation}
Lemma \ref{asymptotics} says that such functions
 are, in some cases, unbounded on $[0,1].$ They can diverge as $r$
  tends to $0$, as specified below.
${\bf Q}^\mu_{\nu}(C_k(r))$ is unbounded for $Re(\mu)>0$
and $\mu+\nu \neq -1,-2,-3, \ldots$.
$\PP^\mu_{\nu}(C_k(r))$ is unbounded if $\mu$ is half-integer (that
is $n$ is odd).
$\QFer^\mu_{\nu}(C_k(r))$ is unbounded if  $\mu$ is integer (that
is $n$ is even). $\Fer^\mu_{\nu}(C_k(r))$ is unbounded if $\mu$ is
half-integer (that is $n$ is odd).
Furthermore, the function $\QFer^\mu_{\nu}(C_k(r))$ is bounded if $\mu$
is half-integer, but it is a complex valued function.

\medskip

If $\mu$ is half-integer, then a bounded real valued solution
to equation
 \eqref{equation.fi} is  $\PP^{-\mu}_{\nu}(x)$
  if $k<0,$ and $\Fer^{-\mu}_{\nu}(x)$ if $k>0$
  (see \eqref{P.unbounded.half.integer},
  \eqref{Fer.bounded.half.integer} with $\mu$
  replaced by $-\mu$).
Formulas \eqref{legendre.connection} and \eqref{ferrers.connection}
 show that the function
 $\PP^{-\mu}_{\nu}(x)$ is a linear combination of
 $\PP^{\mu}_{\nu}(x),{\bf Q}^\mu_{\nu}(x),$ and $\Fer^{-\mu}_{\nu}(x)$
 is a linear combination
of $\Fer^\mu_{\nu}(x),$ $ \QFer^\mu_{\nu}(x).$
Consequently:
$$\phi(r)=\left\{
\begin{array}{l}
s\, (S_k(r))^{1-\frac n2} \, \Fer^\mu_\nu(C_k(r)), \hbox{ if }k>0,
\mu\hbox{ integer }\\
s\, (S_k(r))^{1-\frac n2} \, \Fer^{-\mu}_\nu(C_k(r)), \hbox{ if }k>0,
\mu\hbox{ half-integer }\\
s\, (S_k(r))^{1-\frac n2} \, \PP^\mu_\nu(C_k(r)), \hbox{ if }k<0,
\mu\hbox{ integer }\\
s\, (S_k(r))^{1-\frac n2} \, \PP^{-\mu}_\nu(C_k(r)), \hbox{ if }k<0,
 \mu\hbox{
half-integer }
\end{array}
\right.
$$
where $s$ is a constant chosen in order to ensure the conditions
$\phi(r)>0$ for $r \in [0,1)$ and \eqref{norm1}. The value of
eigenvalue $\lambda_1$ which appears in $\nu$ is  the smallest
positive real number so that $\phi(1) = 0.$

\medskip

In order to find the function $c(r)$ we set
$$\nu^*=-\frac12 +
\sqrt{\frac{(n-1)^2}4 + \frac{\lambda_1-\frac{4 \pi ^2}{T^2}}k}
\,.$$ When $\frac{(n-1)^2}4+ \frac{\lambda_1-\frac{4 \pi
^2}{T^2}}k<0$ then we will always suppose that the imaginary part of
$\nu^*$ is positive. By the same reasoning we did for $\phi$, we
find that the solution of (\ref{equation.c}) is given by
\begin{equation}
c(r)=\left\{
\begin{array}{l}
A\, (S_k(r))^{1-\frac n2} \, \Fer^\mu_{\nu^*}(C_k(r)),
 \hbox{ if }k>0,\mu \hbox{ integer}\\
 A\, (S_k(r))^{1-\frac n2} \, \Fer^{-\mu}_{\nu^*}(C_k(r)),
 \hbox{ if }k>0,\mu \hbox{ half-integer}\\
A\, (S_k(r))^{1-\frac n2}\,  \PP^\mu_{\nu^*}(C_k(r)), \hbox{ if }k<0,
\mu \hbox{ integer}\\
A\, (S_k(r))^{1-\frac n2}\, \PP^{-\mu}_{\nu^*}(C_k(r)), \hbox{ if }k<0, \mu
\hbox{ half-integer}\label{solution.ferrers}
\end{array}
\right.
\end{equation}
where $A$ is a constant that can be determined using the boundary
condition $c(1)=-\phi'(1).$

\medskip

In the next two sections we will study $\sigma(T)$. An essential ingredient will be the following:

  \begin{proposition} \label{monotonicity.zeros} The following facts hold:
\begin{enumerate}
\item Let $r_0>0$ be the $n$-th zero of the associated Legendre function
  $\PP^{\mu}_{-\frac12+i \tau}(C_k(r)).$
 If $\tau \in \R^+,$ then $r_0$ is a decreasing function of $\tau.$
\item Let $r_0 \in (0,\pi)$
be the $n$-th zero of the associated Ferrers function $\Fer^{\mu}_{-\frac12 +i \tau}(C_k(r)).$
 If $\tau \in \R^+,$ then $r_0$ is a decreasing function of $\tau.$
 \end{enumerate}
 \end{proposition}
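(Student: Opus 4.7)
My plan is to deduce the monotonicity of the zeros from the Sturm comparison theorem applied to the radial ODE derived in Section~\ref{different.signs}. First, I would observe that the zeros in $r$ of $\PP^{\mu}_{-\frac12+i\tau}(C_k(r))$ (resp.\ $\Fer^{\mu}_{-\frac12+i\tau}(C_k(r))$) coincide with the zeros of
\[
u_\tau(r) \;:=\; (S_k(r))^{1-n/2}\,\PP^{\mu}_{-\frac12+i\tau}(C_k(r)) \qquad\text{resp.}\qquad (S_k(r))^{1-n/2}\,\Fer^{\mu}_{-\frac12+i\tau}(C_k(r)),
\]
because the prefactor $(S_k)^{1-n/2}$ never vanishes on the relevant interval. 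By the derivation following \eqref{solution.4.cases}, $u_\tau$ is (up to a multiplicative constant) the unique solution of
\[
u'' + (n-1)\,\frac{C_k(r)}{S_k(r)}\,u' + \Lambda(\tau)\,u \;=\; 0
\]
that is bounded at $r=0$, where the relation $\nu(\nu+1) = n(n-2)/4 + \Lambda/k$ with $\nu = -\tfrac12 + i\tau$ yields $\Lambda(\tau) = -k\bigl(\tau^2 + \tfrac{(n-1)^2}{4}\bigr)$.

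Next, I would put this ODE into self-adjoint Sturm--Liouville form by multiplying by the integrating factor $S_k^{n-1}$:
\[
\bigl(S_k(r)^{n-1}\,u_\tau'\bigr)' + \Lambda(\tau)\,S_k(r)^{n-1}\,u_\tau \;=\; 0.
\]
The $n$-th positive zero $r_0(\tau)$ of $u_\tau$ is then characterized by the Dirichlet eigenvalue condition $\Lambda(\tau) = \Lambda_n(r_0)$, where $\Lambda_n(r_0)$ is the $n$-th eigenvalue of the singular Sturm--Liouville problem on $(0,r_0)$ with the bounded-at-$0$ condition at the left endpoint and the Dirichlet condition at $r_0$. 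Standard domain monotonicity for Dirichlet eigenvalues implies that $r_0 \mapsto \Lambda_n(r_0)$ is strictly monotone, so the inverse $\Lambda \mapsto r_0(\Lambda)$ is strictly monotone; composing with the strictly monotone map $\tau \mapsto \Lambda(\tau)$ shows that $r_0(\tau)$ is strictly monotone in $\tau$.

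The remaining and most delicate step is to verify that the monotonicity is \emph{decreasing} in both cases. In case~(1) ($k<0$), $\Lambda(\tau)>0$ is strictly increasing in $\tau$, and combined with the fact that $\Lambda_n(r_0)$ is decreasing in $r_0$, this yields $r_0(\tau)$ strictly decreasing; this is the standard Sturm/Pr\"ufer picture. Case~(2) ($k>0$) is the main obstacle: there $\Lambda(\tau)<0$ is strictly \emph{decreasing}, which at face value would reverse the monotonicity. To recover the correct sign one works in the natural variable $x = C_k(r) = \cos r$ (which is itself a \emph{decreasing} function of $r$ on $(0,\pi)$), where the Ferrers function satisfies the self-adjoint Legendre equation $\bigl((1-x^2)w'\bigr)' + \bigl[\nu(\nu+1) - \mu^2/(1-x^2)\bigr]w = 0$ with ``potential parameter'' $\nu(\nu+1) = -\tfrac14 - \tau^2$, also strictly decreasing in $\tau$; the two sign reversals (decreasing potential in $x$ and decreasing $r\mapsto x$) cancel, and the Sturm comparison in $x$ shows that the $n$-th zero in $r$ decreases with $\tau$. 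Making this sign cancellation rigorous via Sturm--Picone (or the Pr\"ufer phase in the appropriate variable) is the technical heart of the argument.
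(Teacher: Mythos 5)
Your approach --- Sturm--Liouville domain monotonicity --- is genuinely different from the paper's, which adapts Olver's Theorem~7.6.4: one differentiates the zero condition $\PP^\mu_{-\frac12+i\tau}(z_0)=0$ in $\tau$ and applies the Lagrange identity for the self-adjoint Legendre equation to obtain an explicit signed formula for $dz_0/d\tau$, from which the sign of $dr_0/d\tau$ is read off directly. For part~(1) ($k<0$) your route is fine: $\Lambda(\tau)=\tau^2+\tfrac{(n-1)^2}{4}$ is increasing, Dirichlet eigenvalues of growing geodesic balls in $\mathbb{H}^n$ decrease, and the monotonicity of $r_0$ follows (granted that the singular Sturm--Liouville condition at $r=0$ is set up with the bounded branch, as you indicate).

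For part~(2) the ``two sign reversals cancel'' step is the gap, and it is not a detail one can fill in --- the framework you are invoking does not apply. In the variable $x=\cos r$ the self-adjoint Legendre form reads
\[
-\bigl((1-x^2)w'\bigr)'+\frac{\mu^2}{1-x^2}\,w \;=\; \nu(\nu+1)\,w ,
\]
so the spectral parameter of the Sturm--Liouville problem on $(x_0,1)$ is $\nu(\nu+1)$ itself, which equals $-\tfrac14-\tau^2<0$ for $\nu=-\tfrac12+i\tau$, $\tau\in\R^+$. The operator on the left is nonnegative on functions vanishing at $x_0$ and regular (i.e.\ $\sim(1-x)^{\mu/2}$) at $x=1$, so a negative value is never a Dirichlet eigenvalue and your domain-monotonicity comparison has no eigenvalue equation to act on. Equivalently, in the $r$-variable one has $\Lambda(\tau)<0$, whereas every Dirichlet eigenvalue of a geodesic ball in $\mathbb{S}^n$ is strictly positive; indeed the branch bounded at $r=0$ satisfies $(\sin^{n-1}r\,u')'=-\Lambda\sin^{n-1}r\,u>0$ as long as $u>0$, hence is strictly increasing and never vanishes on $(0,\pi)$. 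Thus for the bounded branch --- the one your set-up requires --- there is no $n$-th zero to track, and no ``cancellation'' argument can produce the claimed monotonicity for it; while for the unbounded branch ($\mu$ a half-integer) the Rayleigh-quotient/domain-monotonicity machinery you describe does not apply without carefully controlling the boundary contribution at the singular endpoint. This is exactly the complication that the paper's Lagrange-identity proof sidesteps by differentiating the zero condition directly; if you wish to keep the Sturm-theoretic route, you must either record the vacuity for the bounded branch or produce an explicit boundary-term estimate for the unbounded one, rather than asserting that the two sign changes offset.
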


\begin{proof}
We follow the proof of Theorem 7.6.4 in \cite{O}. Suppose that
$z_0=\cosh(r_0)$ and $\nu=-1/2+i \tau.$  If we differentiate
$\PP^{\mu}_{-1/2+i \tau}
(z_0)=0,$ we get
\begin{equation}
\label{derivative.tau}
(\PP^{\mu}_{-\frac12 +i \tau})' (z_0)\frac{dz_0}{d\tau}+
\frac{\partial \PP^{\mu}_{-\frac12+i \tau}}{\partial \tau}(z_0) =0.
\end{equation}
The differential equation satisfied by the function
$\PP^{\mu}_{\nu}$ is
$$[(1-x^2)(\PP^{\mu}_{\nu})']'+ \left(\nu(\nu+1)-
\frac{\mu}{1-x^2} \right) \PP^{\mu}_{\nu}=0.$$
We multiply it by $\PP^{\mu}_{\eta},$ with $\eta \neq \nu,$
and we subtract from the
expression we get this way, the differential equation
satisfied by $\PP^{\mu}_{\eta}$ multiplied by $P^{\mu}_{\nu}.$
We get:
$$ [(1-x^2)((\PP^{\mu}_{\nu})'\PP^{\mu}_{\eta}-
(\PP^{\mu}_{\eta})'\PP^{\mu}_{\nu})]'+(\nu(\nu+1)-\eta(\eta+1))
\PP^{\mu}_{\nu}\PP^{\mu}_{\eta}=0.$$
If  $\eta=-\frac12 +i \rho,$ then
$\nu(\nu+1)-\eta(\eta+1)=\rho^2-\tau^2.$
In conclusion, if $\rho \neq \tau,$
$$\int \PP^{\mu}_{\nu}\PP^{\mu}_{\eta} dx=
\frac{(x^2-1)((\PP^{\mu}_{\nu})'\PP^{\mu}_{\eta}-
(\PP^{\mu}_{\eta})'\PP^{\mu}_{\nu})}{\rho^2-\tau^2}.$$
If we let $\rho$ tend to $\tau,$ then using the l'H\^opital rule,
we get:
 $$\int (\PP^{\mu}_{\nu})^2 dx=
\frac{(x^2-1)}{2\tau}
\left((\PP^{\mu}_{\nu})'\frac{\partial(\PP^{\mu}_{\nu})}{\partial \tau}-\frac{\partial(\PP^{\mu}_{\nu})}{\partial \tau}'\PP^{\mu}_{\nu}
\right).$$
If we set the integration bounds equal to $1$ and $z_0$ then
\begin{equation}
\label{integral}
 \int_1^{z_0} (\PP^{\mu}_{\nu})^2 dx=
\frac{(z_0^2-1)}{2\tau}(\PP^{\mu}_{\nu})'(z_0)
\frac{\partial(\PP^{\mu}_{\nu})}{\partial \tau}(z_0).
\end{equation}
In other terms:
$$\frac{\partial(\PP^{\mu}_{\nu})}{\partial \tau}(z_0)=
\frac{2 \tau}{(z_0^2-1)} \frac{1}{(\PP^{\mu}_{\nu})'(z_0)}
 \int_1^{z_0} (\PP^{\mu}_{\nu})^2 dx$$
which replaced in \eqref{derivative.tau} gives:
$$\frac{dz_0}{d \tau}=-\frac{2 \tau}{(z_0^2-1)} \frac{1}{((\PP^{\mu}_{\nu})'(z_0))^2}
 \int_1^{z_0} (\PP^{\mu}_{\nu})^2 dx<0.$$
As $z_0=\cosh(r_0)$ then
$$\frac{dz_0}{d \tau}=\frac{dz_0}{dr_0}\frac{dr_0}{d \tau}
=\sinh(r_0) \frac{dr_0}{d \tau}.$$ So
$$\frac{dr_0}{d \tau}= \frac{1}{\sinh(r_0)}
\frac{dz_0}{d \tau}<0.$$

\medskip

 The proof of the monotonicity for the zeros of
 $\Fer^{\mu}_{\nu}$ is essentially the same.
 Suppose that $z_0=\cos(r_0).$
 As $z_0 \in (-1,1),$ we set the bounds of integration
 equal to $-1$ and $z_0.$
In this  case instead of  \eqref{integral}  we have:
$$ \int_{-1}^{z_0} (\Fer^{\mu}_{\nu})^2 dx=
\frac{(z_0^2-1)}{2\tau}(\Fer^{\mu}_{\nu})'(z_0)
\frac{\partial(\Fer^{\mu}_{\nu})}{\partial \tau}(z_0).$$
Plugging it into \eqref{derivative.tau} (which is true also for
$\Fer^{\mu}_{\nu}$), we get:
$$\frac{dz_0}{d \tau}=-\frac{2 \tau}{(z_0^2-1)}
\frac{1}{((\Fer^{\mu}_{\nu})'(z_0))^2}
 \int_{-1}^{z_0} (\Fer^{\mu}_{\nu})^2 dx>0.$$
Now we consider the identity $z_0=\cos(r_0)$ then
$$\frac{dz_0}{d \tau}=\frac{dz_0}{dr_0}\frac{dr_0}{d \tau}
=-\sin(r_0) \frac{dr_0}{d \tau}.$$ As a consequence:
$$\frac{dr_0}{d \tau}= -\frac{1}{\sin(r_0)}
\frac{dz_0}{d \tau}<0.$$
This completes the proof of the proposition. \hfill $\Box$
\end{proof}

\section{Study of $\sigma(T)$}
\label{analytic}
 It is easy to see that $\sigma(T)$ is analytic. This fact comes
from the  following remark: if $K$ is an invertible operator and $I$
is the identity, then for $T > 0$ and any continuous function $v,$
the solution $u$ of
\[
\left(K - \frac{1}{T^2}\, \rho\, I\right)\, u = v
\]
is analytic with respect to $T$ for each constant $\rho$ (this follows
from the equality
\[
(I - sK)^{-1} = \sum_{n \geq 0} s^n\, K^n
\]
for each $s \in \mathbb{R}$). Then to prove that $c$ is analytic it
suffices to take
\[
K = \left( \partial_r^2  + (n-1)\frac{C_k(r)}{S_k(r)} \, \partial_r
+ \lambda_{1} \right), \qquad v = 0, \qquad \rho =
\left(2\pi\right)^2.
\]
We conclude that $c'(1)$ is analytic with respect to
$T$,  and from \eqref{sigma2} follows the analyticity of $\sigma$. The following proposition shows the behavior of $\sigma$ at $0^+$ and $+\infty$.

\begin{proposition}
\label{limit.sigma}
The function $\sigma(T)$ satisfies
\[
\lim_{T \rightarrow 0^+} \sigma (T) = + \infty \qquad and  \qquad
\lim_{T \rightarrow +\infty} \sigma (T) = - \infty.
\]
\end{proposition}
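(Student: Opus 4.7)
The plan is to recast $\sigma(T)$ in terms of the logarithmic derivative at $r=1$ of the continuous-at-$0$ solution of \eqref{equation.c}, and then to handle the two limits separately: for $T\to+\infty$ via continuous dependence of ODEs on parameters, and for $T\to 0^+$ via a singular-perturbation (WKB / modified Bessel) argument.

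First I would fix a convenient normalization: let $u_T$ be the (unique up to scaling) continuous-at-$0$ solution of \eqref{equation.c}, normalized by $u_T(0)=1$. Then $c(r) = -\phi'(1)\,u_T(r)/u_T(1)$, and by \eqref{e:cphi}
\[
\sigma(T) \;=\; -\phi'(1)\,\frac{u_T'(1)}{u_T(1)} + \phi''(1).
\]
Since $\phi>0$ on $[0,1)$, $\phi(1)=0$, and $\phi$ is a nontrivial solution of the second-order linear ODE \eqref{equation.fi}, one has $\phi'(1)<0$ strictly (otherwise $\phi\equiv 0$). Thus $-\phi'(1)>0$ and $\phi''(1)$ is a fixed constant, so the proposition reduces to the two one-sided limits
\[
\lim_{T\to 0^+}\frac{u_T'(1)}{u_T(1)} = +\infty, \qquad \lim_{T\to +\infty}\frac{u_T'(1)}{u_T(1)} = -\infty.
\]

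For $T\to+\infty$, the coefficient $\lambda_1-4\pi^2/T^2$ tends to $\lambda_1$, so \eqref{equation.c} converges to \eqref{equation.fi} uniformly on $[0,1]$. Continuous dependence then yields $u_T\to\tilde\phi$ in $C^1([0,1])$, where $\tilde\phi$ is the continuous-at-$0$ solution of \eqref{equation.fi} with $\tilde\phi(0)=1$; necessarily $\tilde\phi=\phi/\phi(0)$. For each finite $T$ the coefficient is strictly sub-critical ($<\lambda_1$), so by standard Sturm theory $u_T>0$ on $[0,1]$, and in particular $u_T(1)>0$; meanwhile $u_T(1)\to\tilde\phi(1)=0^+$ and $u_T'(1)\to\tilde\phi'(1)<0$. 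Therefore $u_T'(1)/u_T(1)\to-\infty$, which gives $\sigma(T)\to-\infty$.

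For $T\to 0^+$ I would set $M:=2\pi/T$ and rescale $s=Mr$, $U(s):=u_T(s/M)$. Writing $p(r):=(n-1)C_k(r)/S_k(r)\sim(n-1)/r$ near $r=0$, one checks that $U$ satisfies
\[
U''(s) + \frac{p(s/M)}{M}\,U'(s) + \frac{\lambda_1-M^2}{M^2}\,U(s)=0,
\]
whose coefficients converge locally uniformly on $(0,+\infty)$, as $M\to+\infty$, to those of the modified Bessel equation $U_\infty'' + \tfrac{n-1}{s}U_\infty' - U_\infty = 0$. Its continuous-at-$0$ solution is a positive multiple of $s^{1-n/2}I_{n/2-1}(s)$, and the classical asymptotics $I_\nu(s)\sim e^s/\sqrt{2\pi s}$ give $U_\infty'(s)/U_\infty(s)\to 1$ as $s\to+\infty$. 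After propagating this approximation from the inner scale $r\sim 1/M$ to $r=1$ by a Riccati/Sturm comparison on any fixed interval $[\delta,1]$, one obtains
\[
\frac{u_T'(1)}{u_T(1)} \;=\; M\,\frac{U'(M)}{U(M)} \;\sim\; M \;\longrightarrow\; +\infty,
\]
so $\sigma(T)\to+\infty$.

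The hard part is the rigorous treatment of the $T\to 0^+$ limit, which is a genuine singular perturbation: one must simultaneously control $u_T$ on the inner scale $r\sim 1/M$ (where the modified-Bessel approximation lives) and on the outer region $r\sim 1$ (where the logarithmic derivative is to be evaluated), and the coefficient $p(r)$ is singular at $r=0$. A self-contained alternative more in the spirit of the preceding sections is to use the explicit representation \eqref{solution.ferrers} of $c$ and invoke the large-order asymptotics of the Legendre/Ferrers functions ($\nu^*=-\tfrac12+i\tau$ with $\tau\to+\infty$ when $k>0$, and $\nu^*\to+\infty$ real when $k<0$) at the fixed argument $C_k(1)$: these yield the expected exponential blow-up of the logarithmic derivative at $r=1$ and hence the same conclusion.
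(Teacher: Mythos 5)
Your reduction $\sigma(T)=-\phi'(1)\,u_T'(1)/u_T(1)+\phi''(1)$ with $u_T$ the continuous-at-$0$ solution of \eqref{equation.c} normalized by $u_T(0)=1$ is exactly what the paper's formula \eqref{e:cphi} amounts to, and the conclusion correctly hinges on $-\phi'(1)>0$. Beyond this common starting point, your route is genuinely different from the paper's. The paper writes $u_T$ and $\phi$ explicitly in terms of associated Legendre (for $k<0$) or Ferrers (for $k>0$) functions, splits into four cases by parity of $n$ and sign of $k$, uses Proposition~\ref{monotonicity.zeros} (monotonicity of zeros in the degree parameter) to get $u_T(1)\to 0^+$ as $T\to+\infty$, and uses Olver/DLMF large-degree asymptotics (which reduce to modified Bessel functions $I_\mu$) to get $c'(1)\sim -\phi'(1)\sqrt{|k|}\,\nu^*\to+\infty$ as $T\to 0^+$. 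Your $T\to+\infty$ argument replaces all of this machinery with continuous dependence of the ODE on the spectral parameter plus the Sturm fact that the first positive zero of the bounded-at-$0$ solution is strictly decreasing in $\lambda$ (so that $u_T>0$ on $[0,1]$ for $\lambda_1-(2\pi/T)^2<\lambda_1$ while $u_T(1)\to 0^+$ and $u_T'(1)\to\tilde\phi'(1)<0$). That is cleaner, case-free, and is in fact the ODE-theoretic content of the paper's Proposition~\ref{monotonicity.zeros}; it buys you a shorter, more conceptual proof of the $T\to+\infty$ limit at the cost of the small technical point that continuous dependence holds uniformly up to the regular-singular endpoint $r=0$. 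For $T\to 0^+$ your WKB/inner-outer sketch is correct in spirit — the rescaled inner equation is the modified Bessel equation, whose regular solution is $s^{1-n/2}I_{n/2-1}(s)$ with logarithmic derivative tending to $1$, and on any fixed $[\delta,1]$ the Riccati equation for $w=u_T'/u_T$ has a forward-invariant band near $w\approx M$ once $M$ is large. You are right that the nontrivial step is the matching, i.e.\ showing $w(\delta)$ enters the basin of attraction of the stable Riccati branch; you honestly flag this and offer the paper's explicit special-function asymptotics as a rigorous substitute, which is precisely what the paper does. So: same conclusion, genuinely different and more elementary treatment of $T\to+\infty$, and a correct but not fully detailed singular-perturbation treatment of $T\to 0^+$ for which the paper's explicit asymptotics serve as the rigorous version.
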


\begin{proof} We consider independently four cases, according if the dimension $n$ is odd or even and if the curvature $k$ of $\mathbb{M}^n$ is positive ($\mathbb{S}^n$) or negative ($\mathbb{H}^n$). According to remark \ref{remark_k}, we could use $k \neq 0$ instead of $k \in \{-1, 1\}$. For this reason, in the following computation we will distinguish the case $k<0$ and $k>0$ and we do not replace $k$ by its value.
\medskip

{\bf First case: $n$ even and $k$ negative.} If $n$ is even then
$\mu$ is integer. So, we are in the case $k<0$ and $\mu$ integer,
and then the derivative of $c(r)$ is
$$c'(r)= A \, \left(1-\frac n2\right)
 S_k^{-\frac n2}(r) \, C_k(r)\, \Pmn -
k\,  A\,  S_k^{2-\frac n2}(r)\,  (\PP^{\mu}_{\nu^*})'(C_k(r))\,.$$
The last summand can be expressed in terms of $\PP^{\mu}_{\nu^*}$
and $\PP^{\mu+1}_{\nu^*}$
using formula 7.12.17, page 195 \cite{L}:
\begin{equation}
\label{identity.derivative.k.negative}
 (\PP^{\mu}_{\nu^*}(x))'=\frac{1}{x^2-1}
 \left[\sqrt{(x^2-1)}\, \PP^{\mu+1}_{\nu^*}(x)+ \mu\, x\,
 \PP^{\mu}_{\nu^*}(x)\right].
\end{equation}
If $x=C_k(r)$ then $C^2_k(r)-1=-k\, S^2_k(r)$ and
 $\sqrt{C^2_k(r)-1}=\sqrt{-k}\,S_k(r).$
 As a consequence
$$  (\PP^{\mu}_{\nu^*})'(C_k(r))=
\frac1{-k\, S_k^2(r)} \left[\sqrt{-k}\,S_k(r)\,
\Pmpn + \mu\, C_k(r)\, \Pmn \right]
  $$
 and
 \[
 \begin{array}{rcl}
 c'(r) & = & \displaystyle A \left(1- \frac n2 \right) S_k^{-\frac n2}(r)\, C_k(r)\,
 \Pmn \, +\\[4mm]
  & & \qquad +\, A \, S_k^{-\frac n2}(r)
 \left[ \sqrt{-k}\, S_k(r)\,
\Pmpn + \mu\, C_k(r)\, \Pmn  \right]=\\[4mm]
& = & \displaystyle A\, S^{-\frac n2}_k(r)\left[
C_k(r)\,  \Pmn\, \left(1-\frac n2 + \mu\right) +
\sqrt{-k}\,  S_k(r) \, \Pmpn \right]\\[4mm]
& = & A\,  \sqrt{-k}\,  S^{1-\frac n2}_k(r) \, \Pmpn \, .
\end{array}
\]
If we replace $\nu^*$ by $\nu$ and $A$ by $s,$
then $c(r)$ reduces to $\phi(r).$
So the computation above shows also that
$$\phi'(r)= s\, \sqrt{-k} \, S_k^{1-\frac n2}(r)\,
\PP^{\mu+1}_{\nu} (C_k(r))\, .$$
As
$$A=-\frac{\phi'(1)}
{S_k^{1-\frac n2}(1) \PP^{\mu}_{\nu^*} (C_k(1))} = -\frac{s\, \sqrt{-k} \,
\PP^{\mu+1}_{\nu} (C_k(1))}
{\PP^{\mu}_{\nu^*} (C_k(1))} \, ,$$ then the function
$c'(r)$ is
$$c'(r)=\frac{s\,k\, \PP^{\mu+1}_{\nu} (C_k(1))}
{\PP^{\mu}_{\nu^*} (C_k(1))} \, S_k^{1-\frac n2}(r)\,  \PP^{\mu+1}_{\nu^*}
(C_k(r))\, .$$
Consequently
\[
\begin{array}{rcl}
c'(1)+\phi''(1) & = & \displaystyle 
s\, k\, S_k^{1-\frac n2} (1)\,
 \frac{\PP^{\mu+1}_\nu(C_k(1))}
{\PP^{\mu}_{\nu^*}(C_k(1))}\,
 \PP^{\mu+1}_{\nu^*} (C_k(1))+\phi''(1)\, .
 \end{array}
 \]

 \medskip

We remark that
\[
\lim_{T \to +\infty} \nu^*=\nu \, .
\]
Consequently the numerator of $c'(1)$ tends to
\[
s\, k\, S_k^{1-\frac n2} (1)\, (\PP^{\mu+1}_\nu(C_k(1)))^2
\]
when $T$ goes to $+\infty$. As $Im(\nu^*)< Im(\nu)$, then Proposition
\ref{monotonicity.zeros} ensures that the first positive zero of
$\PP^{\mu}_{\nu^*}(C_k(r))$ is bigger than $1.$ Furthermore
$\PP^{\mu}_{\nu^*}(C_k(1))>0$ if $\PP^{\mu}_{\nu}(C_k(r))>0$ for $r
\in [0,1)$ or $\PP^{\mu}_{\nu^*}(C_k(1))<0$ if
$\PP^{\mu}_{\nu}(C_k(r))<0$ for $r \in [0,1)$. By definition, $s$
has the same sign as $\PP^{\mu}_{\nu}(C_k(r))$ on $r \in [0,1).$
 Then, if $s>0,$
$$\lim_{\nu^* \to \nu} \PP^{\mu}_{\nu^*}(C_k(1))= 0^+(=
\PP^{\mu}_{\nu}(C_k(1))),$$ i.e.
$$\lim_{\nu^* \to \nu} \frac 1 {\PP^{\mu}_{\nu^*}(C_k(1))}
=+\infty \,.$$  Similarly
$$\lim_{\nu^* \to \nu} \frac 1 {\PP^{\mu}_{\nu^*}(C_k(1))}
=-\infty \,, $$ if $s<0.$
 As the second summand $\phi''(1)$ does not
depend on $T$ and it is bounded, we conclude that
$$\lim_{T \to +\infty} \sigma(T) = \lim_{T \to +\infty} [c'(1)+\phi''(1)] =
    -\infty. $$


\medskip

Now we consider the limit of $\sigma(T)$ as $T \to 0^+$.
As $k <0$ then
$$\lim_{T\to 0^+} \nu^*=\lim_{T\to 0^+}-\frac 12+
\sqrt{\frac{(n-1)^2}4 + \frac{\lambda_1-\frac{4 \pi ^2}{T^2}}k} :=
\nu_\infty =+\infty.$$ That says also that for $T$ small enough,
$\nu^*$ is real.
For $\nu^*$ big enough
$$c'(1) \sim -\phi'(1) \sqrt{-k}\,
\frac{\PP^{\mu+1}_{\nu^*}(C_k(1))} {\PP^{\mu}_{\nu^*}(C_k(1))} .$$
Formula 14.15.13 \cite{D} provides the asymptotic behaviour of
$\PP^{-\mu}_{\nu^*} $ with respect to $\nu^*$:
\begin{equation}
\label{P-mu} \PP^{-\mu}_{\nu^*}(C_k(1))\sim \frac 1{(\nu^*)^\mu}\,
\sqrt{\frac{1}{\sinh(1)}}
 \, I_\mu\left(\nu^*+\frac12\right)
 \end{equation}
where $I_\mu$ denotes the modified Bessel function of first kind (we
refer to \cite{L} for basic facts about Bessel functions). To get
the asymptotic expression for $\PP^{\mu}_{\nu^*}(C_k(1))$ we use the
following identity 
\begin{equation}
\label{P+-} \PP^{\mu}_{\nu^*}=\frac
{\Gamma(\nu^*+\mu+1)}{\Gamma(\nu^*-\mu+1)}\, \PP^{-\mu}_{\nu^*}
\end{equation}
which follows from \eqref{legendre.connection} using the fact that
$\mu$ is integer.
 Notice that $\frac
{\Gamma(\nu^*+\mu+1)}{\Gamma(\nu^*-\mu+1)} \sim (\nu^*)^t$ for
$\nu^* $ big, where $t=2\mu$ if $\mu$ is integer and $t=2\mu+1$ if
$\mu$ is not integer.
We are considering the case $\mu$ is integer, then from \eqref{P-mu}
we get
 \begin{equation}
 \label{P.mu.integer}
\PP^{\mu}_{\nu^*}(C_k(1)) \sim (\nu^*)^{\mu}\,
\sqrt{\frac{1}{\sinh(1)}}
 \, I_\mu\left(\nu^*+\frac12\right)
 \end{equation}
for $\nu^*$ big.
Observe that
\begin{equation}
\label{besseli.exp} I_\mu\left(\nu^*+\frac12\right) \sim \frac{e^{\nu^*+\frac12}}
{\sqrt{\pi(2 \nu^*+1)}}
\end{equation}
for $\nu^*$ big.
This implies that
$$\frac{\PP^{\mu+1}_{\nu^*}(C_k(1))}{\PP^{\mu}_{\nu^*}(C_k(1)) } \sim
 \nu^* \, \frac{I_{\mu+1}\left(\nu^*+\frac12\right)}{I_{\mu}\left(\nu^*+\frac12\right)}\sim
\nu^*$$
for $\nu^*$ big,
and in conclusion
$$c'(1)\sim - \phi'(1)\, \sqrt{-k}\, \nu^* $$
for $\nu^*$ big.
As $\phi'(1)<0,$ and $\phi''(1)$ is bounded and independent from
$T$, we conclude that
$$\lim_{T\to 0^+} \sigma(T) =\lim_{T\to 0^+} [c'(1)+\phi''(1)]=+\infty.$$

\medskip

{\bf Second case: $n$ odd and $k$ negative.}
If $n$ is odd, then $\mu$ is half-integer. If $k<0$ and $\mu$ is half-integer, then $c(r)$ is given by
$$c(r)=A\,S_k^{1-\frac n2}(r)\,\PP^{-\mu}_{\nu^*}(C_k(r))$$
where $A$ is the constant such that $c(1)=-\phi'(1)$. Moreover
$$\phi(r)=s \, S_k^{1-\frac n2}(r)\, \PP^{-\mu}_{\nu}(C_k(r))$$
where $s$ is a constant such that $\phi(r)>0$ for $r\in [0,1)$ and \eqref{norm1}. Moreover we have
$\phi(1)=0$.
Using \eqref{identity.derivative.k.negative} with $\mu$
replaced by $-\mu,$ we get:
 $$  (\PP^{-\mu}_{\nu^*})'(C_k(r))=
\frac1{-k\, S_k^2(r)} \left[\sqrt{-k}\, S_k(r)\,
\PP^{-\mu+1}_{\nu^*}(C_k(r)) -
\mu \, C_k(r)\,  \PP^{-\mu}_{\nu^*}(C_k(r)) \right]
  $$
 and
 \[
 \begin{array}{rcl}
 c'(r) & = & \displaystyle A \left(1- \frac n2 \right) S_k^{-\frac n2}(r) \, C_k(r)
 \PP^{-\mu}_{\nu^*} (C_k(r)) \, +\\[4mm]
 & & \displaystyle \qquad +\, A\,  S_k^{-\frac n2}(r) \left[ \sqrt{-k}\, S_k(r)\,
\PP^{-\mu+1}_{\nu^*}(C_k(r)) - \mu\,  C_k(r) \, \PP^{-\mu}_{\nu^*}(C_k(r))
\right] \\[4mm]
& = & \displaystyle A \, S^{-\frac n2}_k(r)\left[
C_k(r)\,  \PP^{-\mu}_{\nu^*}(C_k(r))\, \left(1-\frac n2 - \mu\right)+
\sqrt{-k} \, S_k(r) \, \PP^{-\mu+1}_{\nu^*}(C_k(r)) \right]\\[4mm]
& = & \displaystyle A\, [\sqrt{-k} \, S^{1-\frac n2}_k(r) \, \PP^{-\mu+1}_{\nu^*}(C_k(r))-
2 \mu \, C_k(r) \, S^{-\frac n2}_k(r) \, \PP^{-\mu}_{\nu^*}(C_k(r))]
\end{array}
\]
If we replace $\nu^*$ by $\nu$ and $A$ by $s,$
then $c(r)$ reduces to $\phi(r).$
So the computation above shows also that
$$\phi'(r)= s\, [\sqrt{-k} \,S^{1-\frac n2}_k(r)\, \PP^{-\mu+1}_{\nu}(C_k(r))-
2 \mu \, C_k(r) S^{-\frac n2}_k(r) \, \PP^{-\mu}_{\nu}(C_k(r))].$$
As a consequence $$\phi'(1)=s\, \sqrt{-k}\, S^{1-\frac n2}_k(1)\,
 \PP^{-\mu+1}_{\nu}(C_k(1))$$
because $\PP^{-\mu}_{\nu}(C_k(1))=0$.
From $c(1)=-\phi'(1),$ we get the value of the constant $A$:
$$A=- \frac{\phi'(1)}{S^{1-\frac n2}_k(1)\, \PP^{-\mu}_{\nu^*}(C_k(1))}
=-s\, \sqrt{-k}\, \frac{  \PP^{-\mu+1}_{\nu}(C_k(1))}
 { \PP^{-\mu}_{\nu^*}(C_k(1))}.$$

\medskip

 If $T \to 0^+,$ then $\nu^* \to
+\infty.$ If $\nu^*$ is big enough, then
\eqref{P-mu} gives the asymptotic behaviour for $\nu^*$ big:
$$\PP^{-\mu}_{\nu^*}(C_k(1))\sim \frac 1{(\nu^*)^{\mu}} \, \sqrt{\frac{1}{\sinh(1)}}
 \, I_\mu\left(\nu^*+\frac12\right) \, .$$
The asymptotic behaviour of $I_\mu$ is described by
\eqref{besseli.exp}.
Consequently
\[
\begin{array}{rcl}
c'(1) & = & \displaystyle A [\sqrt{-k} \, S^{1-\frac n2}_k(1)\,
\PP^{-\mu+1}_{\nu^*}(C_k(1))- 2 \mu \, C_k(1)\,  S^{-\frac n2}_k(1)
  \PP^{-\mu}_{\nu^*}(C_k(1))] \\[4mm]
& = & \displaystyle \frac{-\phi'(1)}{S^{1-\frac n2}_k(1) \, \PP^{-\mu}_{\nu^*}(C_k(1))}
\sqrt{-k} \, S^{1-\frac n2}_k(1) \, \PP^{-\mu+1}_{\nu^*}(C_k(1)) \\[4mm]
& \sim & \displaystyle -\phi'(1)\, \sqrt{-k} \, \frac{(\nu^*)^{\mu}}{(\nu^*)^{\mu-1}}
\frac{I_\mu\left(\nu^*+\frac12\right)}{I_{\mu-1}\left(\nu^*+\frac12\right)} \\[4mm]
& \sim &
-\phi'(1)\, \sqrt{-k}\, \nu^*
\end{array}
\]
As $\phi'(1)<0,$ $k<0$, and $\phi''(1)$ is bounded and independent
from $T,$ we conclude that $$\lim_{T \to 0^+} \sigma(T) = \lim_{T \to 0^+}
[c'(1)+\phi''(1)]=+\infty.$$

\medskip


It remains to study the behaviour of $\sigma(T)$ as $T \to +\infty.$
In this case $\nu^* \to\nu.$ Proposition \ref{monotonicity.zeros}
ensures that, the first positive zero of $\PP^{-\mu}_{\nu^*}(C_k(r))$
is bigger than $1.$ Consequently
$$\lim_{\nu^*\to \nu}\frac1{\PP^{\mu}_{\nu^*}(C_k(1))}=+\infty $$
if $\PP^{\mu}_{\nu}(C_k(r))>0$ on $[0,1)$ (that is $s>0$) and
$$ \lim_{\nu^*\to \nu}\frac1{\PP^{\mu}_{\nu^*}(C_k(1))}=-\infty $$
if $\PP^{\mu}_{\nu}(C_k(r))<0$ on $[0,1)$ (that is $s>0$). In other
terms such a limit has the same sign as $s.$
 Moreover the numerator of $c'(1)$ tends to
$$-s \, S^{1-\frac n2}_k(1)\, [\sqrt{-k}  \, \PP^{-\mu+1}_{\nu}(C_k(1))] ^2.$$
As the second summand
$\phi''(1)$ does not depend on $T$ and it is bounded,  then
$$\lim_{T \to +\infty} \sigma(T) = \lim_{T \to +\infty} [c'(1)+\phi''(1)] =  -\infty \,.$$

\medskip

{\bf Third case: $n$ even and $k$ positive.} If $n$ is even then $\mu$ is integer.
If $k>0$ and $\mu$ is integer then the function $c(r)$
is given by the first line of \eqref{solution.ferrers}.
As a consequence
$$c'(r)= A \left(1-\frac n2\right)
 S_k^{-\frac n2}(r) \, C_k(r)\, \Fmn -
k \, A\,  S_k^{2-\frac n2}(r)\,  ({\rm P}^{\mu}_{\nu^*})'(C_k(r)).$$
The derivative
$({\rm P}^{\mu}_{\nu^*})'(x)$ is expressed in terms of
${\rm P}^{\mu+1}_{\nu^*}(x)$ and ${\rm P}^{\mu}_{\nu^*}(x)$
using
\begin{equation}
\label{identity.derivative.k.positive}
(\Fer^{\mu}_{\nu}(x))'=\frac{1}{x^2-1 }
\left(\sqrt{1-x^2}\, \Fer^{\mu+1}_{\nu}(x) +x\, \mu\,   \Fer^{\mu}_{\nu}(x)
\right).
\end{equation}
Replacing $x$ by $C_k(r)$ we get:
$$(\Fer^{\mu}_{\nu})'(C_k(r))=
\frac1{-k \, S_k^2(r)} \left[\sqrt{k}\, S_k(r)\,  \Fer^{\mu+1}_{\nu}(C_k(r))
+ C_k(r) \, \mu\,   \Fer^{\mu}_{\nu}(C_k(r)) \right] \, .
$$
from which it follows:
\[
\begin{array}{rcl}
c'(r) & = & A \left(1- \frac n2 \right) S_k^{-\frac n2}(r) \, C_k(r)\,
 \Fer^{\mu}_{\nu^*}(C_k(r)) \, + \\[4mm]
& & \qquad +A\,  S_k^{-\frac n2}(r)
 \left[ \sqrt{k}\, S_k(r)\,
\Fer^{\mu+1}_{\nu^*}(C_k(r))
 +\mu \, C_k(r)\,  \Fer^{\mu}_{\nu^*}(C_k(r))  \right]\\[4mm]
& = & A \, S^{-\frac n2}_k(r)\left[
C_k(r) \, \Fer^{\mu}_{\nu^*}(C_k(r))\,  (1-\frac n2 + \mu)-
\sqrt{k} \, S_k(r) \, \Fer^{\mu+1}_{\nu^*}(C_k(r)) \right]\\[4mm]
& = & A \, \sqrt{k}\,  S^{1-\frac n2}_k(r)\,
\Fer^{\mu+1}_{\nu^*}(C_k(r)).
\end{array}
\]
The constant $A$ is determined in order to have $c(1)=-\phi'(1).$
The function $\phi$ is defined by
$$\phi(r)=s\, S^{1-\frac n2}_k(r)\, \Fer^{\mu}_{\nu}(C_k(r)),$$
where $s$ is the constant such that $\phi(r)>0$ for $r \in [0,1).$ To
get the expression of its derivative, we replace $A$ by $s$ and
$\nu^*$ by $\nu$ in the expression of $c'(r):$
$$\phi'(r)=s\, \sqrt{k} \, S^{1-\frac n2}_k(r)\,
\Fer^{\mu+1}_{\nu}(C_k(r)).$$
The value of the constant $A$ is given by
$$A=-\frac{\phi'(1)}{S^{1-\frac n2}_k(1) \,
 \Fer^{\mu}_{\nu^*}(C_k(1))}=-
 s\, \sqrt{k} \,
 \frac{ \Fer^{\mu+1}_{\nu}(C_k(1))}{\Fer^{\mu}_{\nu^*}(C_k(1))}.$$
So $c'(1)$ is given by
$$c'(1)=-\phi'(1)\, \sqrt{k}\, \frac{ \Fer^{\mu+1}_{\nu^*}(C_k(1))}
{\Fer^{\mu}_{\nu^*}(C_k(1))} =-s \, k\,  \frac{
\Fer^{\mu+1}_{\nu}(C_k(1))}{\Fer^{\mu}_{\nu^*}(C_k(1))}\,  S^{1-\frac
n2}_k(r)\,  \Fer^{\mu+1}_{\nu^*}(C_k(1))\,. $$
In conclusion, if $k>0$ and $\mu$ is integer,
 then $\sigma(T)=c'(1)+\phi''(1)$ equals
\begin{equation}
\label{sigma.k.positive}
 -s\, k \, S^{1-\frac n2}_k(1)\,
\frac{ \Fer^{\mu+1}_{\nu}(C_k(1))}{\Fer^{\mu}_{\nu^*}(C_k(1))}\,
 \Fer^{\mu+1}_{\nu^*}(C_k(1))
+  \phi''(1).
\end{equation}

\medskip

For $T$ big enough $\nu^*$ is a real valued
increasing function of $T.$
If $T \to +\infty, $ then $\nu^* \to \nu$ (which is a real number
in this case).
If $\Fer^{\mu}_{\nu}(C_k(r)) >0$ for $r \in [0,1),$
(that is $s>0$) then, from Proposition \ref{monotonicity.zeros}, we get
$$\lim_{\nu^* \to \nu} \Fer^{\mu}_{\nu^*}(C_k(1))= 0^+
(= \Fer^{\mu}_{\nu}(C_k(1))).$$
So $$\lim_{T \to +\infty} \frac1{\Fer^{\mu}_{\nu^*}(C_k(1))}=
+\infty\, .$$
Similarly, if $\Fer^{\mu}_{\nu}(C_k(r)) <0$ for $r \in [0,1),$
(that is $s<0$), then
$$\lim_{T \to +\infty} \frac1{\Fer^{\mu}_{\nu^*}(C_k(1))}=
-\infty \, .$$
In other terms the sign of such a limit is the same as $s.$
When $T \to +\infty,$ the numerator of $c'(1)$ tends to
$$-s \, k\, S_k^{1-\frac n2}(1)\, [\Fer^{\mu+1}_\nu(C_k(1))]^2 \, .$$
Consequently, as $k>0,$
  $$\lim_{T \to +\infty} \sigma(T) =\lim_{T \to +\infty} [c'(1)+\phi''(1)] = -\infty \, .$$

  \medskip

Now we study the limit of $\sigma(T)$ as $T \to 0^+.$
If $T \to 0^+$ then  $\nu^* \to \nu_\infty:=-1/2 + i \infty.$
We set $\nu^*=-1/2 + i \tau.$
In this case we use the following asymptotic formula (exercise 13.4, page 473 \cite{O}):
$$\Fer^{-\mu}_{-\frac12 + i \tau}(C_k(1))
=\frac 1{\tau^{\mu}}\,  \sqrt{\frac {1} {\sin(1)}}\,
I_{\mu}(\tau)\, \left(1+\mathcal{O}\left(\frac1\tau\right)\right)$$ when $\tau$ goes to infinity. To get the
corresponding formula for $\Fer^{\mu}_{-\frac12 + i \tau}(C_k(1))$ we
use
 the following identity (formula 14.9.2 \cite{D}):
\begin{equation}
\label{Ferrers+-}
\Fer^{\mu}_{\nu^*}=\frac {\Gamma(\nu^*+\mu+1)}{\Gamma(\nu^*-\mu+1)}
\left[\cos(\mu \pi) \, \Fer^{-\mu}_{\nu^*}+ \frac{2 \sin(\mu
\pi)}{\pi}\, \QFer_{\nu^*}^{-\mu}\right].
\end{equation}
As $\mu$ is integer, then \eqref{Ferrers+-} reduces to:
$$\Fer^{\mu}_{\nu^*}(x)=(-1)^\mu\,
\frac{\Gamma(\nu^*+\mu+1)}{\Gamma(\nu^*-\mu+1)}\,
\Fer^{-\mu}_{\nu^*}(x)\, .$$

We need to estimate the limit as $T \to 0^+$ of
 \begin{equation}
\label{Ferrers+mu-mu}
\begin{array}{rcl}
\displaystyle \frac{\Fer^{\mu+1}_{\nu^*}(x)}{\Fer^{\mu}_{\nu^*}(x)} &
= & \displaystyle -\frac{\Gamma(\nu^*+\mu+2)}{\Gamma(\nu^*-\mu)}
\frac{\Gamma(\nu^*-\mu+1)}{\Gamma(\nu^*+\mu+1)}
\frac{\Fer^{-\mu-1}_{\nu^*}(x)}{\Fer^{-\mu}_{\nu^*}(x)}\\[4mm]
& = & \displaystyle -(\nu^*+\mu+1)(\nu^*-\mu)
\frac{\Fer^{-\mu-1}_{\nu^*}(x)}{\Fer^{-\mu}_{\nu^*}(x)}.
\end{array}
\end{equation}
Observe that
$$(\nu^*+\mu+1)(\nu^*-\mu)=(\nu^*)^2+\nu^*-\mu-\mu^2=
\left( -\frac 12 + i \tau \right)^2 -\frac 12 + i \tau -\mu-\mu^2=$$
$$ -\frac 14 -\tau^2- \mu - \mu^2<0.$$
This implies that
\begin{equation}
\label{Fer.on.Fer}
\frac{\Fer^{\mu+1}_{\nu^*}(C_k(1))}{\Fer^{\mu}_{\nu^*}(C_k(1)) }
\sim \tau^2 \frac{\tau^\mu}{\tau^{\mu+1}}
\frac{I_{\mu+1}(\tau)}{I_{\mu}(\tau)}\sim \tau,
\end{equation}
for $\tau$ big, since
$$I_{\mu}(\tau) \sim \frac{e^{\tau}}{\sqrt{2 \pi \tau}}$$
(formula 5.16.5 \cite{L}).
In conclusion for $\tau$ big,
$$ c'(1) \sim -\phi'(1) \, \sqrt{k}\,
\frac{\Fer^{\mu+1}_{\nu^*}(C_k(1))}{\Fer^{\mu}_{\nu^*}(C_k(1)) }\sim
-\phi'(1)\,  \sqrt{k}\, \tau.$$
As $\phi'(1)<0$ and $\phi''(1)$ is bounded and does not depend on
$T$, then we conclude that
$$\lim_{T\to 0^+} \sigma(T) = \lim_{T\to 0^+} [c'(1)+\phi''(1)]= +\infty.$$

\medskip

{\bf Fourth case: $n$ odd and $k$ positive.} If $n$ is odd then
$\mu$ is half-integer. If $k>0$ and $\mu$ is half-integer, then
$c(r)$ is given by
$$c(r)=A\, S_k^{1-\frac n2}(r)\, \Fer^{-\mu}_{\nu^*}(C_k(r))$$
where $A$ is the constant such that
$c(1)=-\phi'(1)$. Moreover
$$\phi(r)=s \, S_k^{1-\frac n2}(r) \, \Fer^{-\mu}_{\nu}(C_k(r))$$
where $s$ is the constant such that $\phi(r)>0$ for $r\in (0,1)$ and \eqref{norm1}. Moreover we have
$\phi(1)=0.$
Using \eqref{identity.derivative.k.positive} with $\mu$
replaced by $-\mu,$ we get:
 $$  (\Fer^{-\mu}_{\nu^*})'(C_k(r))=
\frac1{-k\,  S_k^2(r)} \left[\sqrt{k}\, S_k(r)\,
\Fer^{-\mu+1}_{\nu^*}(C_k(r)) -\mu\,  C_k(r)\,
\Fer^{-\mu}_{\nu^*}(C_k(r)) \right]
  $$
 and
 \[
\begin{array}{rcl}
c'(r) & = & \displaystyle A \left(1- \frac n2 \right) S_k^{-\frac n2}(r)\,  C_k(r)\,
 \Fer^{-\mu}_{\nu^*} (C_k(r))\, + \\[4mm]
 & & \displaystyle \qquad +\, A\,  S_k^{-\frac n2}(r) \left[ \sqrt{k}\, S_k(r)\,
\Fer^{-\mu+1}_{\nu^*}(C_k(r)) - \mu\,  C_k(r) \, \Fer^{-\mu}_{\nu^*}(C_k(r))
\right]\\[4mm]
& = & \displaystyle A \, S^{-\frac n2}_k(r)\left[
C_k(r)\,  \Fer^{-\mu}_{\nu^*}(C_k(r)) (1-\frac n2 - \mu)+
\sqrt{k} \, S_k(r)\, \Fer^{-\mu+1}_{\nu^*}(C_k(r)) \right] \\[4mm]
& = & \displaystyle A\, [\sqrt{k} \, S^{1-\frac n2}_k(r)\,
\Fer^{-\mu+1}_{\nu^*}(C_k(r))- 2 \mu \, C_k(r) \, S^{-\frac n2}_k(r)
\,  \Fer^{-\mu}_{\nu^*}(C_k(r))].
\end{array}
\]
If we replace $\nu^*$ by $\nu$ and $A$ by $s,$
then $c(r)$ reduces to $\phi(r).$
So the computation above shows also that
$$\phi'(r)= s [\sqrt{k}\,  S^{1-\frac n2}_k(r) \, \Fer^{-\mu+1}_{\nu}(C_k(r))-
2 \mu \, C_k(r)\,  S^{-\frac n2}_k(r)  \, \Fer^{-\mu}_{\nu}(C_k(r))]$$ from
which
$$\phi'(1)= s \, \sqrt{k} \, S^{1-\frac n2}_k(1) \, \Fer^{-\mu+1}_{\nu}(C_k(1))\, .$$
From $c(1)=-\phi'(1),$ we get the value of the constant $A$:
$$A=\frac{-\phi'(1)}{ S^{1-\frac n2}_k(r) \, \Fer^{-\mu}_{\nu^*}(C_k(r))}
=-s\,  \sqrt{k}\,  \frac{\Fer^{-\mu+1}_{\nu}(C_k(1))}
{\Fer^{-\mu}_{\nu^*}(C_k(1))}\, .$$

\medskip

If $T \to 0^+$ then $Im(\nu^*)\to \tau_\infty =+ \infty.$ For $T$ big
enough, $\nu^*$ is a real number and if $T \to +\infty,$ then $\nu^*
\to \nu \in \R.$ If $\nu^*$ is big enough, then
\eqref{P-mu} gives the asymptotic behaviour for $\nu^*$ big:
$$P^{-\mu}_{\nu^*}(C_k(1))\sim \frac 1{(\nu^*)^{\mu}} \sqrt{\frac{1}{\sinh(1)}}
 \, I_\mu((\nu^*+1/2)).$$
 The asymptotic behaviour of $I_\mu$ is described by
\eqref{besseli.exp}.
Consequently $$c'(1)=A [\sqrt{k} S^{1-\frac n2}_k(1)
\Fer^{-\mu+1}_{\nu^*}(C_k(1))- 2 \mu C_k(1) S^{-\frac n2}_k(1)
  \Fer^{-\mu}_{\nu^*}(C_k(1))] \sim$$
$$ -\phi'(1) \sqrt{k} \frac{\Fer^{-\mu+1}_{\nu^*}(C_k(1))}{\Fer^{-\mu}_{\nu^*}(C_k(1))}
\sim -\phi'(1) \sqrt{k}  \frac{(\nu^*)^{\mu}}{(\nu^*)^{\mu-1}}
\frac{I_\mu((\nu^*+1/2))}{I_{\mu-1}((\nu^*+1/2))}\sim
-\phi'(1)\sqrt{k}\nu^*.$$
As $- \phi'(1)>0$ and $\phi''(1)$ is bounded and independent from
$T,$ then we conclude that $$\lim_{T \to 0}
[c'(1)+\phi''(1)]=+\infty.$$

\medskip

It remains to study the behaviour of $\sigma(T)$ as $T \to +\infty.$
If $T \to +\infty$ then $\nu^* \to \nu$ increasing (for $T$ big
enough $\nu^*$ is real). Proposition \ref{monotonicity.zeros}
ensures that, the first positive zero of
$\Fer^{-\mu}_{\nu^*}(C_k(r))$ is bigger than $1.$ Consequently
$$\lim_{\nu^*\to \nu}\frac1{\Fer^{\mu}_{\nu^*}(C_k(1))}=+\infty$$
if $P^{\mu}_{\nu}(C_k(r))>0$ on $[0,1)$ (that is $s>0$) and
$$ \lim_{\nu^*\to \nu}\frac1{\Fer^{\mu}_{\nu^*}(C_k(1))}=-\infty$$
if $\Fer^{\mu}_{\nu}(C_k(r))<0$ on $[0,1)$ (that is $s<0$). In other
terms such a limit has the same sign as $s.$
 The numerator of $c'(1),$ tends to
$$-s \, k\,  S^{1-\frac n2}_k(1)\,  [\Fer^{-\mu+1}_{\nu}(C_k(1))] ^2 \,.$$
As the second summand
$\phi''(1)$ does not depend on $T$ and it is bounded,  then
$$\lim_{T \to +\infty} \sigma(T) = \lim_{T \to +\infty} [c'(1)+\phi''(1)] =  -\infty \,.$$
This completes the proof of the proposition. \hfill $\Box$
\end{proof}

 \section{Lyapunov-Schmidt reduction and bifurcation}

In view of the analyticity of $\sigma$ (showed in section
\ref{analytic}) and Proposition \ref{limit.sigma}, $\sigma$ has at
least a zero and the set of the zeros of $\sigma$ is finite. Let
$\{0_1, 0_2, ..., 0_p\}$ denotes the set of the zeros of $\sigma$,
and let $T_*$ be the smallest zero such that $\sigma$ changes sign
at $T_*$, say $T_* = 0_q$ (the existence of $T_*$ follows also from
the analyticity of $\sigma$ and Proposition \ref{limit.sigma}). It
is clear then the eigenspace $V_1$ (defined in Proposition \ref{H})
belongs to the kernel of $H_{T_*}$. As $\sigma_j(T) = \sigma(T/j)$
we obtain that $\sigma_j$ is analytic on $T$ and the set of the
zeros of $\sigma_j$ is $\{j\, 0_1, j\, 0_2, ..., j\, 0_p\}$. It is
clear that if $j$ is big enough then $T_* \notin \{j\, 0_1, j\, 0_2,
..., j\, 0_p\}$, and this means that $V_j$ does not belong to the
kernel of $H_{T_*}$ for almost all $j$. This implies that the kernel
of $H_{T_*}$ is of the form $V_{j_1} \oplus \cdots \oplus V_{j_l}$
with $1 = j_1 < \cdots < j_l$. Moreover if $V_{j_i} \in
\textnormal{Ker}(H_{T_*})$ and $j_i \neq 1$ then the function
$\sigma_{j_i}(T)$ does not change sign at $T_*$ by the definition of
$T_*$.
\medskip

We summarize such facts in the following proposition, where we use also the ellipticity of the linearized operator $H_T$ given by Proposition \ref{H}.
\begin{proposition}
   \label{kernel.HT}
There exists a positive real number $T_*$ such that the kernel of $H_{T_*}$ is given by $V_{j_1} \oplus \cdots \oplus V_{j_l}$, with $1 = j_1 < \cdots < j_l$. Moreover the eigenvalue associated to the eigenspace $V_1$, considered as a function on $T$, changes the sign at $T_*$, and the eigenvalues associated to the other eigenspaces $V_{j_2},..., V_{j_l}$, always considered as functions on $T$, do not change sign at $T_*$. There exists a constant $c >0$ such that
\[
\| w \|_{\mathcal \mathcal{C}^{2,\alpha}_{\textnormal{even},0}
(\mathbb{R}/2\pi \mathbb{Z})}  \leq c \, \| H_{T_*}(w)
\|_{\mathcal C^{1,\alpha}_{\textnormal{even},0}
(\mathbb{R}/2\pi \mathbb{Z})} \, ,
\]
provided $w$ is $L^2(\mathbb{R}/2\pi \mathbb{Z})$-orthogonal to $V_0 \oplus V_{j_1} \oplus \cdots \oplus V_{j_l}$, where $V_0$ is the space of constant functions.
\end{proposition}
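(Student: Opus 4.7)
The plan is to build $T_*$ directly from the boundary behavior of $\sigma$, then use the relation $\sigma_j(T)=\sigma(T/j)$ to read off the kernel of $H_{T_*}$ and the sign behavior of the remaining eigenvalues near $T_*$, and finally to derive the coercivity bound by standard elliptic theory on the spectral orthogonal complement.

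First, I would show that $\sigma$ has only finitely many zeros on $(0,+\infty)$ and that at least one is a genuine sign change. From section \ref{analytic} we know $\sigma$ is real-analytic on $(0,+\infty)$, and Proposition \ref{limit.sigma} gives the limits at the endpoints. Hence there exist $0<\varepsilon<M$ with $\sigma>0$ on $(0,\varepsilon]$ and $\sigma<0$ on $[M,+\infty)$, so the zero set of $\sigma$ lies in the compact interval $[\varepsilon,M]$; since $\sigma\not\equiv 0$, analyticity forces this set to be finite, say $\{0_1<0_2<\dots<0_p\}$. Because $\sigma$ passes from $+\infty$ to $-\infty$, at least one $0_i$ is a sign change, and I define $T_*$ to be the smallest such.

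The next step is to identify the kernel. By Proposition \ref{H}, $H_{T_*}$ is diagonal with respect to the decomposition $\bigoplus_{j\ge 1} V_j$ and acts on $V_j$ by $\sigma_j(T_*)=\sigma(T_*/j)$. Therefore $V_j\subset \ker H_{T_*}$ precisely when $T_*/j\in\{0_1,\dots,0_p\}$. Since $T_*/j\to 0^+$ as $j\to +\infty$ and $\sigma>0$ on $(0,\varepsilon]$, only finitely many indices $j$ can satisfy this, while $j=1$ does since $T_*/1=T_*$ is a zero. This yields $\ker H_{T_*}=V_{j_1}\oplus\dots\oplus V_{j_l}$ with $1=j_1<j_2<\dots<j_l$. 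For the sign-change statement, on $V_1$ the eigenvalue $\sigma_1=\sigma$ changes sign at $T_*$ by construction. For $j=j_i\ne 1$, the eigenvalue is $\sigma(T/j_i)$, and at $T=T_*$ we have $T_*/j_i<T_*$, so $T_*/j_i$ is one of the zeros $0_m$ of $\sigma$ with $m<q$; by the minimality in the definition of $T_*$, $\sigma$ does not change sign at $T_*/j_i$, hence $\sigma_{j_i}$ does not change sign at $T_*$.

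Finally, for the coercivity estimate I would use that $H_{T_*}$ is a self-adjoint first-order elliptic operator (Proposition \ref{H}), diagonal in the Fourier basis $\{\cos(jt)\}_{j\ge 1}$ with eigenvalues $\sigma_j(T_*)=\sigma(T_*/j)$. On the $L^2$-orthogonal complement of $V_0\oplus V_{j_1}\oplus\dots\oplus V_{j_l}$ all these eigenvalues are nonzero; moreover the asymptotics derived in section \ref{different.signs} (giving $\sigma(T)\sim $ const$\cdot \tau$ with $\tau\sim 1/T$ as $T\to 0^+$) yield $|\sigma_j(T_*)|\to +\infty$ as $j\to +\infty$, while the remaining finitely many nonzero values are bounded below by a positive constant. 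Thus $H_{T_*}$ is invertible on this complement with a uniformly bounded inverse on the Fourier side, and the standard Schauder estimate for first-order elliptic operators on $\mathbb{R}/2\pi\mathbb{Z}$ upgrades this to the claimed $C^{2,\alpha}\!\to\! C^{1,\alpha}$ bound.

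The main obstacle in the argument is the sign-change classification on the eigenspaces $V_{j_2},\ldots,V_{j_l}$: it is precisely the minimality of $T_*$ as the first sign-change zero that forces the other relevant eigenvalues to touch zero without changing sign, and this asymmetry is exactly what will later allow a Crandall--Rabinowitz type bifurcation to be performed only in the $V_1$ direction. All other ingredients are routine: the finiteness of the zero set is a consequence of analyticity and Proposition \ref{limit.sigma}, and the elliptic estimate is classical once uniform spectral bounds are in place.
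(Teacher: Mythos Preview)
Your argument is correct and follows essentially the same route as the paper: analyticity of $\sigma$ together with Proposition~\ref{limit.sigma} gives a finite zero set, $T_*$ is chosen as the smallest sign-change zero, the relation $\sigma_j(T)=\sigma(T/j)$ then yields the finite kernel and the sign-change dichotomy via minimality of $T_*$, and the coercivity estimate is read off from the ellipticity of $H_{T_*}$ established in Proposition~\ref{H}. The paper states the coercivity more tersely (invoking ellipticity alone), while you spell out the eigenvalue asymptotics, but the substance is identical.
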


Such proposition says us that the operator $H_{T_*}$ has finite-dimensional kernel, and that it is an isomorphism from the orthogonal to its kernel over its image (see also Proposition \ref{H} and its proof). We are going to use now these two properties.

\medskip

Consider the space $C^{2,\alpha}_{\textnormal{even},0} (\mathbb{R}/2\pi
 \mathbb{Z}) \times (0,+\infty)$. Clearly the curve
\[
\Xi = \{(v,T) \quad : \quad v \equiv 0 \}
\]
in $\mathcal \mathcal{C}^{2,\alpha}_{\textnormal{even},0} (\mathbb{R}/2\pi
 \mathbb{Z}) \times (0,+\infty)$ belongs to the zero level set of the operator $F$, i.e. its points solve the equation
 \[
 F(v,T) = 0.
 \]
In this section we prove that $(0,T_*)$ is a bifurcation point of $\Xi$ for the zero level set of the operator $F$. 

 \medskip
 Proposition  \ref{kernel.HT} ensures that the kernel of the
 operator $H_{T_*}$ is finite-dimensional and it equals  $V_{j_1} \oplus
 \cdots \oplus V_{j_l}$.
 Let $Q$ be the projection operator onto the image of $H_{T_*}$ and $Q
 \circ F$ the composition of operators $F$ and $Q$. We write a function
 $v \in \mathcal \mathcal{C}^{2,\alpha}_{\textnormal{even},0} (\mathbb{R}/2\pi
 \mathbb{Z})$ as $v = v^{\|}\, + v^{\bot}$ with $v^{\|} \in \textnormal{Ker}H_{T_*}$ and
 $v^{\bot} \in (\textnormal{Ker}H_{T_*})^{\bot}$.
 The next result (that represent the classical Lyapunov-Schmidt reduction for our problem) follows from the implicit function Theorem:

 \begin{proposition}
 \label{lyapunov}
 For all $v^{\|} \in \textnormal{Ker}H_{T_*}$ whose norm is small
 enough and for all $T$ sufficiently close to $T_*$ there exists
 a unique function $v^{\bot} = v^{\bot}(v^{\|},T)$ such that
 \[
 Q \circ F \left(v^{\|}\, + v^{\bot}, T\right) = 0.
 \]
 \end{proposition}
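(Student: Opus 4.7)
The plan is to set this up as a direct application of the implicit function theorem in Banach spaces, with the space of unknowns split along the kernel of $H_{T_*}$. First I would introduce the auxiliary operator
\[
G : (\textnormal{Ker}H_{T_*})^{\bot} \times \textnormal{Ker}H_{T_*} \times \mathbb{R} \longrightarrow Q\bigl(\mathcal{C}^{1,\alpha}_{\textnormal{even},0}(\mathbb{R}/2\pi\mathbb{Z})\bigr),
\]
defined by
\[
G(v^{\bot}, v^{\|}, T) := Q \circ F(v^{\|} + v^{\bot}, T),
\]
where the orthogonal decomposition is taken with respect to the $L^{2}(\mathbb{R}/2\pi\mathbb{Z})$-inner product, using the fact from Proposition \ref{kernel.HT} that $\textnormal{Ker}H_{T_*}$ is finite-dimensional (hence closed and complemented in the Hölder space).

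Next I would verify the two hypotheses of the implicit function theorem at the base point $(v^{\bot}, v^{\|}, T) = (0, 0, T_*)$. Since $\phi_1$ solves \eqref{eq:11-11} with constant Neumann data on $\partial C_1^T$, we have $F(0,T) = 0$ for every $T > 0$, in particular $G(0,0,T_*) = 0$. For the derivative, smoothness of $F$ in $v$ (which follows from smooth dependence of $\phi_{v,T}$ and $\lambda_{v,T}$ on $v$ together with Schauder estimates) gives that $\partial_{v^{\bot}} G(0,0,T_*)$ is the restriction of $Q \circ H_{T_*}$ to $(\textnormal{Ker}H_{T_*})^{\bot}$. By Proposition \ref{H}, $H_{T_*}$ is a self-adjoint first-order elliptic operator preserving each $V_j$, so its image $Q\bigl(\mathcal{C}^{1,\alpha}_{\textnormal{even},0}(\mathbb{R}/2\pi\mathbb{Z})\bigr) = \textnormal{Im}(H_{T_*})$ coincides (in the $L^{2}$-sense) with $(\textnormal{Ker}H_{T_*})^{\bot}$ intersected with $\mathcal{C}^{1,\alpha}_{\textnormal{even},0}$. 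The estimate in Proposition \ref{kernel.HT},
\[
\| w \|_{\mathcal{C}^{2,\alpha}_{\textnormal{even},0}(\mathbb{R}/2\pi\mathbb{Z})} \leq c \, \| H_{T_*}(w) \|_{\mathcal{C}^{1,\alpha}_{\textnormal{even},0}(\mathbb{R}/2\pi\mathbb{Z})},
\]
valid on the orthogonal complement of $V_0 \oplus V_{j_1} \oplus \cdots \oplus V_{j_l}$, shows that $\partial_{v^{\bot}} G(0,0,T_*)$ is a bounded linear bijection from $(\textnormal{Ker}H_{T_*})^{\bot}$ onto its image, and hence a topological isomorphism by the open mapping theorem.

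Applying the implicit function theorem then produces a smooth map $(v^{\|}, T) \mapsto v^{\bot}(v^{\|}, T)$, defined for $(v^{\|}, T)$ in a neighbourhood of $(0, T_*)$, with $v^{\bot}(0, T_*) = 0$ and satisfying $G(v^{\bot}(v^{\|}, T), v^{\|}, T) = 0$, which is precisely the assertion. The main subtlety I anticipate is the bookkeeping around the codomain: one must confirm that $F$ indeed maps into $\mathcal{C}^{1,\alpha}_{\textnormal{even},0}(\mathbb{R}/2\pi\mathbb{Z})$ (mean zero is automatic by construction of $F$, and evenness follows from the symmetry in $v$), so that projecting by $Q$ onto $\textnormal{Im}(H_{T_*})$ is consistent with the splitting used for the domain. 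Once this matching of spaces is verified, the rest is a textbook application of the implicit function theorem and requires no further computation.
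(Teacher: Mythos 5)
Your proposal is correct and follows essentially the same route as the paper: define the auxiliary operator $J(v^{\|}, v^{\bot}, T) = Q \circ F(v^{\|} + v^{\bot}, T)$ on $\textnormal{Ker}H_{T_*} \times (\textnormal{Ker}H_{T_*})^{\bot} \times (0,+\infty)$ and apply the implicit function theorem, with the invertibility of $\partial_{v^{\bot}}$ at $(0,0,T_*)$ furnished by the a priori estimate in Proposition \ref{kernel.HT}. The paper states this more tersely, while you spell out the verification that $F(0,T)=0$, the identification of the partial derivative with $Q \circ H_{T_*}$ restricted to the complement of the kernel, and the matching of domain and codomain splittings; these are exactly the details implicit in the paper's one-line invocation of Proposition \ref{kernel.HT}.
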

 \begin{proof} Define the operator $J$ as follows:
 \[
 J(v^{\|}, v^{\bot},T) = Q \circ F \left(v^{\|}\, + v^{\bot}, T\right)
 \]
 from $\textnormal{Ker}H_{T_*} \times (\textnormal{Ker}H_{T_*})^{\bot} \times (0,+\infty)$ into the image of $H_{T_*}$. By Proposition \ref{kernel.HT} the implicit function theorem applies to get the existence of a unique function
 \[
 v^{\bot}(v^{\|},T) \in (\textnormal{Ker}H_{T_*})^{\bot}
 \]
 smoothly depending on $v^{\|}$ and $T$ in a neighborhood of $(0,T_*)$ such that
 \[
 J(v^{\|}, v^{\bot}(v^{\|},T),T) = 0.
 \]
 This completes the proof of the proposition. \hfill $\Box$
 \end{proof}

 \medskip

 Now we can define the operator
 \[
 G(v^{\|},T)=
 (I - Q) \circ F \left(v^{\|}\, + v^{\bot}(v^{\|},T), T\right) = 0.
 \]
 where $I$ is the identity operator and $v^{\bot}(v^{\|},T)$ is the function given by Proposition \ref{kernel.HT}. $G$ is a finite-dimensional operator from $\textnormal{Ker}H_{T_*} \times (0,+\infty)$ into the space orthogonal to the image of $H_{T_*}$. We remark that our main theorem \ref{maintheorem} will be proved if we show that $(0,T_*)$ is a bifurcation point for the zero level set of $G$. In fact, it is easy to prove that the curve
 \[
\Gamma =  \{(v^{\|},T) \in \textnormal{Ker}H_{T_*} \times
(0,+\infty) \quad : \quad v^{\|} =0\}
 \]
 is a solution of $ G(v^{\|},T)=0$ with $v^{\bot}(0,T) =0$. Then, the fact that $(0,T_*)$ is a bifurcation point  of $\Gamma$ for the zero level set of $G$ means that in every neighborhood of
 $(0, T_*)$ in $\textnormal{Ker}H_{T_*} \times (0,+\infty)$ contains solutions of
 the equation $ G(v^{\|},T)=0$ which are not in $\Gamma$, i.e. there exists a sequence $(v_i^{\|}, T_i) \in \textnormal{Ker}H_{T_*} \times (0,+\infty)$ with $v_i^{\|} \neq 0$ such that $ G(v_i^{\|},T_i)=0$. Hence
 \[
 Q \circ F \left(v_i^{\|}\, + v^{\bot}(v_i^{\|},T_i), T_i\right) = 0
 \]
 and
 \[
 (I - Q) \circ F \left(v_i^{\|}\, + v^{\bot}(v_i^{\|},T_i), T_i\right) = 0
 \]
 that imply
 \[
 F \left(v_i^{\|}\, + v^{\bot}(v_i^{\|},T_i), T_i\right)= 0
 \]
and $v_i : = v_i^{\|}\, + v^{\bot}(v_i^{\|},T_i) \neq 0$.

\medskip

Let us prove that $(0,T_*)$ is a bifurcation point of $\Gamma$ for the zero level set of $G$.
 We start by recalling a useful result about bifurcation (see \cite{kielhofer} and \cite{smoller} for details). Let $L$ be an operator on $\mathbb{B}_1 \times \Lambda$ into
 $\mathbb{B}_2$, where $\mathbb{B}_1$ and $\mathbb{B}_2$ are Banach
 spaces (or subspaces) and $\Lambda$ is an interval of $\mathbb{R}$.
 Thus suppose that $\Gamma = (x(s),s)$ is a curve of solutions of the
 equation $L(x,s)=0$. Let $(x_0,s_0) = (x(s_0), s_0)$ be an interior
 point on this curve with the property that every neighborhood of
 $(x_0,s_0)$ in $\mathbb{B}_1 \times \Lambda$ contains solutions of
 the equation $L(x,s)=0$ which are not in $\Gamma$, i.e. it is a \textit{bifurcation point} of $\Gamma$ for the zero level set of $L$. In our case $\mathbb{B}_1 = C^{2,\alpha}_{\textnormal{even},0} (\mathbb{R}/2\pi \mathbb{Z})$, $\Lambda  = (0,+\infty)$, $\mathbb{B}_2 = C^{1,\alpha}_{\textnormal{even},0} (\mathbb{R}/2\pi \mathbb{Z})$ and $x(s) = 0$ for all $s$.
 A necessary condition for bifurcation at $(0,s_0)$ is that $0$ is an isolated
 eigenvalue of finite algebraic multiplicity, say $l$, of the
 operator obtained by linearizing $L$ with respect to $x$ at
 $(0,s_0)$, which can be denoted by $D_{x}L(0,s_0)$. It is crucial to
 know how the eigenvalue $0$ of $D_{x}L(0,s_0)$ changes when $s$
 varies in a neighborhood of $s_0$. It is possible to show (see
 \cite{kato}) that the generalized eigenspace
 $E_{s_0}$ of the eigenvalue $0$ of $D_{x}L(0,s_0)$ having dimension
 $l$ is perturbed to an invariant space $E_{s}$ of $D_{x}L(0,s)$ of
 dimension $l$ too, and all perturbed eigenvalues near $0$ (the
 so-called $0$-group) are eigenvalues of the finite-dimensional
 operator $D_{x}L(0,s)$ restricted to the $l$-dimensional invariant
 space $E_s$. Moreover the eigenvalues in that $0$-group depend
 continuously on $s$. Let us give the definition of {\it odd crossing number}:

 \begin{definition}
 We set $\Theta(s)$ to be equal to $1$ if there are no
 negative real eigenvalues in the $0$-group of $D_{x}L(0,s).$
 Otherwise
 \[
 \Theta(s) = (-1)^{l_1 + \cdots + l_h}
 \]
 if $\mu_1, \ldots, \mu_h$ are all the negative real eigenvalues of
 the $0$-group having algebraic multiplicity $l_1,\ldots,l_h$,
 respectively. If $D_{x}L(0,s)$ is regular in a neighborhood of $s_0$
 (naturally except in the point $s_0$) and $\Theta(s)$ changes the
 sign at $s_0$ then $D_{x}L(0,s)$ is said to have an odd crossing
 number at $s_0$.
 \end{definition}

 In presence of an odd crossing number,  a standard result
 known as the Krasnosel'skii Bifurcation Theorem (see \cite{kielhofer} for
 the proof) applies:

 \begin{theorem}
 If $D_{x}L(0,s)$ has an odd crossing number at $s_0$, then $(0,s_0)$
 is a bifurcation point for $L(x,s) =0$ with respect to the curve
 $\{(0,s)\,\, |\,\, s\ \textnormal{in a neighborhood of}\ s_0\}$.
 \end{theorem}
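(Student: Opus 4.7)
The plan is to prove the Krasnosel'skii Bifurcation Theorem by contradiction, using Leray--Schauder degree theory. Suppose $(0,s_0)$ is not a bifurcation point of the trivial branch $\{(0,s)\}$ for the zero level set of $L$. Then there exist $r>0$ and $\delta>0$ such that the only zeros of $L(x,s)$ in $\overline{B_r(0)}\times[s_0-\delta,\,s_0+\delta]\subset \mathbb{B}_1\times\Lambda$ are the trivial ones $(0,s)$. By the regularity hypothesis on $D_xL(0,s)$ away from $s_0$ (shrinking $\delta$ if necessary), the point $x=0$ is an isolated zero of the map $L(\cdot,s)$ for each $s\in[s_0-\delta,s_0+\delta]\setminus\{s_0\}$, so the Leray--Schauder degree $d(s):=\deg(L(\cdot,s),B_r(0),0)$ is well defined and equals the index $i(L(\cdot,s),0)$.

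The first key step is homotopy invariance: since no zero of $L(\cdot,s)$ touches $\partial B_r(0)$ for any $s\in[s_0-\delta,s_0+\delta]$, the degree $d(s)$ is constant on the whole interval. The second key step is to compute the index via the classical eigenvalue-sign formula for a compact perturbation of the identity (which is the framework in which Leray--Schauder degree lives, and into which our $L$ falls after writing $D_xL(0,s)=I-K(s)$ with $K(s)$ compact, thanks to the Fredholm character of the linearized operator, cf.~Proposition~\ref{H}). In this setting one has
\[
i(L(\cdot,s),0)\;=\;\varepsilon\cdot(-1)^{\beta(s)},
\]
where $\beta(s)$ is the sum of the algebraic multiplicities of the real eigenvalues of $D_xL(0,s)$ lying in $(-\infty,0)$, and $\varepsilon\in\{+1,-1\}$ is a sign depending only on the eigenvalues outside a small neighbourhood of $0$.

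By Kato's perturbation theorem for isolated eigenvalues of finite algebraic multiplicity, all eigenvalues of $D_xL(0,s)$ that are not in the $0$-group stay bounded away from $0$ for $s$ close to $s_0$, hence do not cross the origin; their contribution to $(-1)^{\beta(s)}$ is locally constant, and can be absorbed into $\varepsilon$. The only eigenvalues whose sign can change are those in the $0$-group, and the induced change in $(-1)^{\beta(s)}$ is precisely encoded by the quantity $\Theta(s)$ of the preceding definition. Thus $i(L(\cdot,s),0)=\varepsilon\,\Theta(s)$ for $s\in[s_0-\delta,s_0+\delta]\setminus\{s_0\}$. The odd crossing number hypothesis says that $\Theta(s_0-\delta)$ and $\Theta(s_0+\delta)$ have opposite signs, so
\[
d(s_0-\delta)\;=\;\varepsilon\,\Theta(s_0-\delta)\;\neq\;\varepsilon\,\Theta(s_0+\delta)\;=\;d(s_0+\delta),
\]
contradicting the constancy of $d(s)$. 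Hence $(0,s_0)$ is a bifurcation point.

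The main obstacle is justifying the index formula in the infinite-dimensional Banach space setting: one needs $L(\cdot,s)$ to be a compact perturbation of the identity (or, more generally, a $C^1$ Fredholm map of index zero) so that the Leray--Schauder degree is defined, and then one needs the product formula reducing the index on $\mathbb{B}_1$ to the Brouwer index on the finite-dimensional invariant subspace $E_s$ on which the $0$-group eigenvalues live. In the concrete application of this theorem to the operator $G$ constructed via the Lyapunov--Schmidt reduction of Proposition~\ref{lyapunov}, the finite-dimensional reduction is already in place, so the degree-theoretic computation becomes a purely Brouwer-degree calculation on $\textnormal{Ker}\,H_{T_*}$, bypassing the subtleties of the infinite-dimensional version.
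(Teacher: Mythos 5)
The paper does not prove this theorem; it is quoted from Kielh\"ofer's book, which the authors cite for the proof. Your sketch correctly reproduces the standard degree-theoretic argument found there: assuming no bifurcation, one isolates a small box $\overline{B_r(0)}\times[s_0-\delta,s_0+\delta]$ containing only the trivial zeros, invokes homotopy invariance of the Leray--Schauder degree, and computes the local index $i(L(\cdot,s),0)=(-1)^{\beta(s)}$ (the count, with algebraic multiplicity, of negative real eigenvalues of $D_xL(0,s)$, the complex ones cancelling in conjugate pairs), whose variation across $s_0$ is exactly $\Theta(s)$; the odd crossing number then forces a jump in the degree, a contradiction. You also appropriately flag the one substantive hypothesis the infinite-dimensional argument rests on --- that $L(\cdot,s)$ be a compact perturbation of the identity, or more generally a proper Fredholm map of index zero, so that the degree is defined and the product formula reduces the index to a finite-dimensional computation --- and correctly note that in this paper the theorem is only invoked for the finite-dimensional operator $G$ produced by the Lyapunov--Schmidt reduction of Proposition~\ref{lyapunov}, so that Brouwer degree suffices and the subtlety is vacuous.
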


The fact that $(0,T_*)$ is a bifurcation point for the operator $G$ follows then from the Krasnosel'skii Bifurcation Theorem and the following:

 \begin{proposition}
 $D_{v^{\|}}G(0,T)$ has an odd crossing number at $T_*$.
 \end{proposition}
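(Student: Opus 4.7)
The plan is to show that $D_{v^{\|}}G(0,T)$ acts diagonally on the decomposition $\textnormal{Ker}(H_{T_*}) = V_{j_1}\oplus \cdots \oplus V_{j_l}$ with eigenvalues $\sigma_{j_i}(T)$, and then to read off the crossing number directly from the sign behaviour of $\sigma_1(T)$ at $T_*$.

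First I would compute $D_{v^{\|}}G(0,T)$ explicitly. Since $v\equiv 0$ trivially solves $F(\cdot,T)=0$, uniqueness in Proposition \ref{lyapunov} yields $v^{\bot}(0,T)\equiv 0$. Differentiating the defining identity of $G$ and using that the linearization of $F$ at $(0,T)$ is $H_T$, we obtain
\[
D_{v^{\|}} G(0,T)\,w \;=\; (I-Q)\, H_T \bigl(w + D_{v^{\|}} v^{\bot}(0,T)\,w \bigr)
\]
for $w\in \textnormal{Ker}(H_{T_*})$. The correction $D_{v^{\|}} v^{\bot}(0,T)\,w$ lies in $(\textnormal{Ker}\,H_{T_*})^{\perp} = \bigoplus_{j\notin\{j_1,\ldots,j_l\}} V_j$. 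By Proposition \ref{H}, $H_T$ preserves every $V_j$, hence preserves both $\textnormal{Ker}(H_{T_*})$ and its orthogonal complement. Since $I-Q$ is the orthogonal projection onto $\textnormal{Ker}(H_{T_*})$ (by self-adjointness and Fredholmness of $H_{T_*}$), the correction term is annihilated and $D_{v^{\|}} G(0,T)\big|_{V_{j_i}}$ is multiplication by $\sigma_{j_i}(T)$.

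Next I would check the regularity hypothesis built into the definition of odd crossing number. Each $\sigma_{j_i}$ is analytic in $T$ (Section \ref{analytic}) and vanishes at $T_*$; being analytic and not identically zero, its zeros are isolated, so on a sufficiently small punctured neighborhood of $T_*$ all $\sigma_{j_i}(T)\neq 0$ and $D_{v^{\|}} G(0,T)$ is invertible there.

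For the crossing number itself, Proposition \ref{kernel.HT} tells us that $\sigma_{j_1}=\sigma$ changes sign at $T_*$, while $\sigma_{j_2},\ldots,\sigma_{j_l}$ do not. Since each $V_{j_i}$ is one-dimensional in $\mathcal{C}^{2,\alpha}_{\textnormal{even},0}(\mathbb{R}/2\pi\mathbb{Z})$, the $0$-group consists of $l$ simple eigenvalues, exactly one of which flips sign as $T$ crosses $T_*$. Letting $n_-(T)$ denote the number of negative eigenvalues in this group, one has $n_-(T_*+\varepsilon) - n_-(T_*-\varepsilon) = \pm 1$, whence $\Theta(T) = (-1)^{n_-(T)}$ changes sign at $T_*$ — the odd crossing number. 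The step I expect to be the most delicate is the clean identification of $I-Q$ with the orthogonal projection onto $\textnormal{Ker}(H_{T_*})$ at the H\"older-space level, together with the corresponding invariant block decomposition of $H_T$ for $T$ near $T_*$; once this is in place, the sign count on the $\sigma_{j_i}(T)$ is elementary.
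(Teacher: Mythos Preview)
Your argument is correct and follows the same route as the paper: you identify $D_{v^{\|}}G(0,T)$ with $H_T$ restricted to $V_{j_1}\oplus\cdots\oplus V_{j_l}$ (the paper asserts this ``is clear, from the definition of $G$'' while you supply the chain-rule justification and the observation that the $v^{\bot}$-contribution is killed by $(I-Q)$), and then read off the odd crossing number from Proposition~\ref{kernel.HT} exactly as the paper does. The point you flag as delicate---that $(I-Q)$ projects onto $\textnormal{Ker}(H_{T_*})$ and that $H_T$ respects the block decomposition---is immediate from the self-adjointness and $V_j$-invariance in Proposition~\ref{H}, so there is no gap.
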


 \begin{proof} We observe that we can write
 \[
 v^{\|} = \sum_{i=1}^l a_{k_i}\, \cos(k_i\, t)
 \]
 where $1 = k_1 < \cdots < k_l$.
 It is clear, from the definition of $G$, that $D_{v^{\|}}G(0,T)$ preserves
 the eigenspaces, and
 \[
 D_{v^{\|}}G(0,T) = H_T |_{V_{j_1} \oplus \cdots \oplus V_{j_l}}.
 \]
 Then the $0$-group of eigenvalues is given by $\sigma_{j_1}(T),\ldots,
 \sigma_{j_l}(T)$, where $\sigma_{j_1}(T) = \sigma(T)$. For $T=T_*$ they
 are all equal to $0$. Moreover, by the proposition \ref{kernel.HT} only
 $\sigma_{j_1}(T)$ changes sign at $T_*$, and the corresponding eigenspace
 has dimension $1$. This means that $D_{v^{\|}}G(0,T)$ has a crossing
 number at $T_*$ and completes the proof of the proposition.  \hfill $\Box$
 \end{proof}

\end{document}